\theoremstyle{plain}
\newtheorem{thm}{Theorem}[section]
\newtheorem{lem}[thm]{Lemma}
\newtheorem{cor}[thm]{Corollary}
\newtheorem{example}[thm]{Example}
\newtheorem{eg}[thm]{Example}
\theoremstyle{definition}
\newtheorem{defn}[thm]{Definition}
\newtheorem{remark}[thm]{Remark}
\newtheorem{notation}[thm]{Notation}
\newcommand{\Rmnum}[1]{\expandafter\@slowromancap\romannumeral #1@}
\newcommand{\cat}[1]{\boldsymbol{\mathscr{#1}}}
\DeclareSymbolFont{bbold}{U}{bbold}{m}{n}
\DeclareSymbolFontAlphabet{\mathbbold}{bbold}
\DeclareMathOperator{\CSP}{CSP}
\DeclareMathOperator{\Inc}{Inc}
\DeclareMathOperator{\Block}{Block}
\DeclareMathOperator{\dist}{dist}
\newcommand{\up}[1]{\textup{#1}}
\renewcommand{\ge}{\geqslant}
\newcommand{\Mod}{\operatorname{Mod}}
\newcommand{\Th}{\operatorname{Th}}
\renewcommand{\emptyset}{\varnothing}
\newcommand{\lrge}{\operatorname{large}}
\begin{document}

\title{Relativised homomorphism preservation at the finite level}

\author{Lucy Ham}
\address{Department of Mathematics and Statistics\\ La Trobe University\\ Victoria  3086\\
Australia} \email{leham@students.latrobe.edu.au}
\thanks{The author would like to thank her PhD supervisors Brian Davey and Tomasz Kowalski, as well as Marcel Jackson for many useful discussions}

\keywords{Lattice, relational structure, constraint satisfaction problem, homomorphism preservation, antivariety}

\begin{abstract}
In this article, we investigate the status of the homomorphism preservation property amongst restricted classes of finite relational structures and algebraic structures. We show that there are many homomorphism-closed classes of finite lattices that are definable by a first-order sentence but not by existential positive sentences, demonstrating the failure of the homomorphism preservation property for lattices at the finite level. In contrast to the negative results for algebras, we establish a finite-level relativised homomorphism preservation theorem in the relational case. More specifically, we give a complete finite-level characterisation of first-order definable finitely generated anti-varieties relative to classes of relational structures definable by sentences of some general forms. When relativisation is dropped, this gives a fresh proof of Atserias's characterisation of first-order definable constraint satisfaction problems over a fixed template, a well known special case of Rossman's Finite Homomorphism Preservation Theorem.
\end{abstract}

\maketitle

\section{Introduction}
\subsection*{Finite-level preservation theorems: algebras vs relations}
It is widely known that many classical results in model theory fail or become meaningless when considered at the finite level. An important class of such results are preservation theorems, which relate the form of syntactic expressions to the closure properties of classes they define. Classical preservation theorems are almost uniformly false when restricted to the class of all finite relational structures, but also tend to fail when restricted to the class all of finite algebras. A key example is the {\L}os-Tarski Preservation Theorem, which says that a first-order sentence is preserved under taking extensions if and only if it is equivalent to an existential sentence. See Tait~\cite{Tait} or Gurevich and Shelah~\cite{Gurevich} for a counterexample in the finite setting for relational structures, and Clark, Davey, Jackson and Pitkethly~\cite[Example $4.3$]{CDJP} for a counterexample in the finite setting for algebras. For some preservation theorems only one side of the story is explained; for example the ${\mathbb I}{\mathbb S}{\mathbb P}$-Preservation Theorem, which asserts that a first-order formula is preserved under taking substructures and direct products if and only if it is equivalent to a universal-Horn sentence, remains to be settled at the finite-level in the relational setting. In contrast, Example 4.3 of \cite{CDJP} also gives a counterexample for algebraic structures at the finite level. On the other hand, Lyndon's Positivity Theorem, which says that a first-order sentence is preserved under taking surjective homomorphisms if and only if it is equivalent to a positive sentence, fails in the finite setting for relational structures (see Ajtai and Gurevich~\cite{Ajtai}), while its status remains unknown in the algebraic setting.  A notable exception is Rossman's celebrated Finite Homomorphism Preservation Theorem~\cite[Theorem 5.16]{Rossman}: a first-order sentence is preserved under homomorphisms on finite relational structures if and only if it is equivalent, amongst finite relational structures, to an existential positive sentence. The question whether the classical Homomorphism Preservation Theorem would continue to hold on finite relational structures was a long-standing one in finite model theory. The analogous question for algebraic signatures remains unresolved. The proof for the relational case is highly non-trivial and, as Rossman suggests, the task of adjusting for function symbols is conceivably more difficult.
\subsection*{Relativised preservation theorems}
Preservation theorems that hold for the class of all finite relational structures do not necessarily relativise to subclasses of this class, and moreover, while many classical preservation theorems fail amongst all finite relational structures, some preservation theorems are recovered in restricted classes of finite relational structures. In \cite{ADG}, Atserias, Dawar, and Grohe show that the {\L}os-Tarski Preservation Theorem is recovered in a number of classes of finite relational structures that are of interest from a computational point of view, including classes of finite acyclic graphs, classes of structures of bounded degree, and classes of structures of treewidth at most $k$, for any positive integer $k$. In \cite{ADK}, Atserias, Dawar and Kolaitis show that the homomorphism preservation property holds for a number of classes of finite relational structures, including classes of structures of bounded degree, classes of bounded treewidth, and more generally, classes of finite structures whose cores exclude at least one minor. However these results are not implied by Rossman's Finite Homomorphism Preservation Theorem. Furthermore, preservation theorems need not behave consistently when we restrict our attention to a particular subclass of the class of all finite relational structures: for example, the homomorphism preservation property holds for the class of all planar graphs, but in contrast, the extension preservation property fails in this setting.

\subsection*{Main results}
 Relativising preservation theorems to restricted classes of finite structures appears to be completely independent of whether or not the corresponding preservation property holds amongst all finite relational structures. On the other hand, relativisation of preservation theorems to restricted classes of finite algebras is not as well studied. In this paper, we are able to give some insight into this topic.
\begin{enumerate}
\item \label{item:1} We show that within the class of finite bounded lattices, there are many classes that demonstrate the failure of the homomorphism preservation property. More specifically, we show that for a fixed finite bounded lattice ${\mathbf L}$, the class of finite bounded lattices admitting a homomorphism into ${\mathbf L}$ is definable by a single first-order sentence, relative to the class of finite bounded lattices, but is not definable by universal sentences. See Theorem~\ref{thm:algfail} in Section~\ref{section:SectionAlg}.

\item \label{item:2} In contrast to (\ref{item:1}), we establish a relativised homomorphism preservation theorem (see Theorem~\ref{thm:Sausage} in Section~\ref{section:SectRelational}) for a wide range of classes of finite relational structures. Theorem~\ref{thm:Sausage} gives us a complete characterisation of first-order definable classes of relational structures admitting a homomorphism into a fixed finite relational structure, relative to classes ${\cat K}$ definable by sentences of some general forms. In Theorem~\ref{thm:colourfam} we extend this characterisation to classes of relational structures admitting a homomorphism into one of a finite number of relational structures.
\end{enumerate}
 We will see that Theorem~\ref{thm:algfail} mentioned in (\ref{item:1}) not only demonstrates the failure of the homomorphism preservation property for the class of finite bounded lattices, but also the failure of the extension preservation property ({\L}os-Tarski Preservation Theorem). So the extension preservation property, which fails for the class of all finite algebras and also the class of all finite relational structures, continues to fail when we restrict to the class of finite bounded lattices. This result could be contrasted with the positive results of Atserias, Dawar and Grohe~\cite{ADG} mentioned previously, which show that the extension preservation property is recovered in a number of restricted classes of finite relational structures.

 When the class ${\cat K}$ in Theorem~\ref{thm:Sausage} is relaxed to include all finite relational structures, our result coincides with a well known one due to Atserias, which says that first-order definable constraint satisfaction problems are precisely those with finite tree duality (or equivalently, those definable by existential positive sentences), see Theorem $9$ of \cite{AA}. It is also a consequence Rossman's Finite Homomorphism Preservation Theorem, which holds for classes of relational structures admitting a homomorphism into one of \emph{any} number of finite relational structures.  Our proof, however, is independent of both Atserias and Rossman and relies mostly on some constructions borrowed from Larose, Loten and Tardif given in~\cite{LLT}. We remark that while it remains unclear if the homomorphism preservation property in its full generality holds in these classes, the restricted preservation property, nevertheless, covers many commonly encountered homomorphism-closed classes.


\subsection*{Article structure}
This article is comprised of three main sections. We begin with some background and definitions that will be required for the remainder of the paper, we then investigate the status of Relativised Homomorphism Preservation Theorem~\ref{thm:relhompresthm} at the finite level for both algebraic and relational signatures.
In Section~\ref{section:SectionAlg}, we give counterexamples to the finite-level version of Relativised Homomorphism Preservation Theorem~\ref{thm:relhompresthm} for algebraic structures. Moreover, we completely characterise those classes of finite lattices admitting a homomorphism into a fixed finite bounded lattice.
In Section~\ref{section:SectRelational}, we establish a restricted form of the finite-level version of Relativised Homomorphism Theorem~\ref{thm:relhompresthm} for relational structures. The proof follows an Ehrenfeucht-Fra\"{\i}ss\'e game argument on structures obtained by adjusting some constructions borrowed from Larose, Loten and Tardif given in~\cite{LLT}.

\section{Preliminaries}

\begin{defn}
A \emph{signature} (or \emph{language} or \emph{type}) is a pair ${\mathcal L}=\langle {\mathcal R}, {\mathcal F}\rangle$ consisting of a set of relation symbols ${\mathcal R}$ and a set of function symbols ${\mathcal F}$. Each relation and function symbol has an associated \emph{arity} $n$; we do not allow for nullary relation symbols so $n$ is quantified over ${\mathbb N}$ in this case; we do however include the possibility of $0$-ary function symbols so $n$ is quantified over ${\mathbb N}\cup\{0\}$ in this case.
An ${\mathcal L}$-\emph{structure}  ${\bf A}=\langle A; {\mathcal R}^{\bf A}, {\mathcal F}^{\bf A}\rangle$ consists of a universe $A$ together with an interpretation of
\begin{itemize}
\item each $n$-ary relation symbol $R\in {\mathcal R}$ as a subset $R^{\bf A}$ of ${A}^n$,
\item each $n$-ary function symbol $F\in {\mathcal F}$ as a function $F^{\bf A}\colon A^{n}\to A$.
\end{itemize}
If ${\mathcal R}=\emptyset$, then ${\bf A}$ is an \emph{algebra}, if ${\mathcal F}=\emptyset$ then ${\bf A}$ is a \emph{relational structure}.
\end{defn}
\begin{defn}
Let ${\mathbf A}$ be a structure in some signature ${\mathcal L}$ consisting of possibly both relation and function symbols. We let ${\mathcal L}_{\mathbf A}$ be the language obtained by adding a nullary function symbol $a$ to ${\mathcal L}$, for each $a\in A$.
\end{defn}
\begin{defn}
Let $n\in{\mathbb N}$, let ${\mathcal R}=\{R_1, \dots, R_n\}$ be a purely relational signature, and let $\mathbf A = \langle A; R_1^{\mathbf A},\dots,R_n^{\mathbf A}\rangle$. A \emph{weak substructure} of $\mathbf A$ is a structure $\mathbf A'=\langle A';R_1^{{\mathbf A}'},\dots,R_n^{{\mathbf A}'}\rangle$, where $A'\subseteq A$ and $R_i^{{\mathbf A}'}\subseteq R_i^{\mathbf A}$ for each $i\in\{1,\dots,n\}$.  The weak substructure $\mathbf A'$ is \emph{proper} if either $A'\subsetneqq A$ or $A'=A$ and there exists $i\leq n$ such that $R_i^{{\mathbf A}'}\subsetneqq R_i^{\mathbf A}$.
\end{defn}
\begin{defn}
Let ${\mathcal L}$ be some signature consisting of possibly both relation and function symbols, and let ${\cat J}$ be a class of ${\mathcal L}$-structures. We define ${\mathbb H}^{\gets}({\cat J})$ to be the class of all ${\mathcal L}$-structures admitting a homomorphism into some member of the class~${\cat J}$.
\end{defn}

\begin{defn}
Let $I$ be a set and let ${\cat F}=\{{\mathbf A}_i \mid i\in I\} $ be a set of structures in some signature ${\mathcal L}$. We say that $\cat F$ is a \emph{homomorphism-independent} set if for all $i, j\in I$ with $i\ne j$, there is no homomorphism from ${\mathbf A}_i$ to ${\mathbf A}_j$.
\end{defn}



\begin{defn}
Let $n,m\in {\mathbb N}$ and let ${\cat F}=\{{\mathbf A}_1, \dots {\mathbf A}_n\}$ be a finite set of finite structures in the purely relational signature ${\mathcal R}=\{R_1, \dots, R_m\}$. We denote the finite models in ${\mathbb H}^{\gets}({\cat F})$ by ${\mathbb H}^{\gets}_{\textup{fin}}({\cat F})$. An \emph{obstruction} for the class ${\cat F}$ is any finite ${\mathcal R}$-structure not belonging to ${\mathbb H}^{\gets}_{\textup{fin}}({\cat F})$. An obstruction ${\mathbf C}$ for the class ${\cat F}$ is \emph{critical} if every proper weak substructure of ${\mathbf C}$ belongs to ${\mathbb H}^{\gets}_{\textup{fin}}({\cat F})$.
We say that ${\mathbb H}^{\gets}_{\textup{fin}}({\cat F})$ satisfies the \emph{finite duality property} if there exists a finite set ${\cat S}$ of finite structures with the following property\textup: a finite ${\mathcal R}$-structure ${\mathbf B}$ belongs to the class ${\mathbb H}^{\gets}_{\textup{fin}}({\cat F})$ if and only if ${\mathbf B}$ avoids a homomorphism from every member of ${\cat S}$.
Let ${\mathbf A}$ be a finite structure in the purely relational signature ${\mathcal R}$.

The class of all finite ${\mathcal R}$-structures admitting a homomorphism into ${\mathbf A}$ is denoted $\CSP({\mathbf A})$. The membership problem for $\CSP({\mathbf A})$ is known as the \emph{constraint satisfaction problem} over template ${\mathbf A}$.
Note that ${\mathbb H}^{\gets}_{\textup{fin}}({\cat F})=\bigcup\limits_{i\in\{1, \dots, n\} }\CSP({\mathbf A}_i)$.
\end{defn}

\begin{defn}
Let $m\in{\mathbb N}$, let ${\mathbf A}$ be a finite relational structure in the purely relational signature ${\mathcal R}=\{R_1, \dots, R_m\}$ with $R_i$ of arity $r_i$, for all $i\in\{1,\dots, m\}$. The \emph{incidence multigraph} of ${\mathbf A}$, denoted $\Inc({\mathbf A})$, is defined as the bipartite multigraph with parts $A$ and $\Block({\mathbf A})$, where $\Block({\mathbf A})$ consists of all pairs $(R_i, (a_1, \dots, a_{r_i}))$ such that $R_i\in\{R_1, \dots R_m\}$ and $(a_1, \dots, a_{r_i})\in R_i^{\mathbf A}$, and with edges $e_{a,k,B}$ joining $a\in A$ to $B=(R_j, (a_1, \dots, a_{r_j}))\in \Block({\mathbf A})$, when $a_k=a$ for some $k\in\{1, \dots, r_j\}$. Let $a$ and $b$ be two elements in $A$. Then the \emph{distance between $a$ and $b$ in} ${\mathbf A}$ $\dist_{\mathbf A}(a,b)$ is defined to be half the number of edges in a shortest path connecting them in $\Inc({\mathbf A})$. The \emph{girth} of ${\mathbf A}$ is defined to be half the shortest length of a cycle in $\Inc({\mathbf A})$. The \emph{diameter} of ${\mathbf A}$ is defined to be $\max\{\dist_{\mathbf A}(u,v)\mid (u,v)\in {A^2}\}$.
\end{defn}

\begin{defn}
Let ${\cat K}$ be a class of ${\mathcal L}$-structures and let ${\cat J}$ be a subclass of ${\cat K}$. We define the following class: \[
{\mathbb H}_{\cat K}^{\gets}({\cat J}):=\{{\mathbf B}\in {\cat K}\ |\ \text{there exists}\ {\mathbf J}\in {\cat J}\ \text{and a homomorphism from}\ {\mathbf B}\ \text{to}\ {\mathbf J}\}.
\]
In the case where ${\cat K}$ is the class of all ${\mathcal L}$-structures we abbreviate ${\mathbb H}_{\cat K}^{\gets}$ to ${\mathbb H}^{\gets}$.  In general, ${\mathbb H}_{\cat K}^{\gets}({\cat J})={\mathbb H}^{\gets}({\cat J})\cap {\cat K}$.
\end{defn}
Let $n\in \mathbb{N}$, let ${\mathbf A}\in {\cat K}$, and let ${\cat F}=\{{\mathbf A_1}, \dots, {\mathbf A_n}\}$ be a finite set of structures in ${\cat K}$. We follow Gorbunov~\cite{Gorbachev} and refer to the class ${\mathbb H}_{\cat K}^{\gets}({\mathbf A})$ as a \emph{principal colour-family in ${\cat K}$}, and we refer to the class ${\mathbb H}_{\cat K}^{\gets}({\cat F})$ as a \emph{colour-family in} ${\cat K}$.
\begin{defn}
Let $n\in {\mathbb N}\cup\{0\}$ and let $m\in{\mathbb N}$.  A sentence $\psi$ is called an \emph{anti-identity} if it is of the form
\[
\forall x_1, \dots, x_{n}\  [\neg \alpha_1(x_1, \dots, x_{n}) \vee \dots \vee \neg \alpha_{m}(x_1, \dots, x_{n})],
\]
where $\alpha_i(x_1,\dots, x_{n})$ is an atomic formula for all $i\in \{1, \dots, m\}$.
\end{defn}
\begin{notation}
Let $\Sigma$ be a set of first-order sentences in some signature $\mathcal{L}$. We let $\Mod({\Sigma})$ denote the class of all models of $\Sigma$.
\end{notation}.

\begin{defn}
A subclass $\cat K'$ of a class $\cat K$ is called \emph{strictly elementary relative to} ${\cat K}$ if there is a first-order sentence $\phi$ such that ${\cat K'}={\cat K} \cap \Mod(\phi)$.
\end{defn}

\begin{defn}
A subclass $\cat K'$ of a class $\cat K$ is called an \emph{anti-variety relative to} ${\cat K}$ if $\cat K'= {\cat K} \cap \Mod({\Sigma})$ where ${\Sigma}$ is a set of anti-identities.
\end{defn}

The proof of the Relativised Homomorphism Preservation Theorem~\ref{thm:relhompresthm} requires only standard applications of some model-theoretic fundamentals. We have included its proof, however, for completeness sake.
We remark that the results of this paper are more easily formulated for classes that are complements of homomorphism closed classes. Such classes are closed under taking homomorphic pre-images: classes ${\cat J}$ satisfying ${\mathbf B}\in{\cat J}$ whenever ${\mathbf B}$ admits a homomorphism to ${\mathbf A}$ and ${\mathbf A}\in{\cat J}$. For this reason, we work with the homomorphism preservation theorem in its complemented form.
\begin{thm}[Relativised Homomorphism Preservation Theorem]\label{thm:relhompresthm}
Let $\Sigma$ be a set of sentences in the signature ${\mathcal L}$, let ${\cat K}=\Mod(\Sigma)$, and let ${\cat J}$ be a subclass of~${\cat K}$.
Then the class ${\mathbb H}_{\cat K}^{\gets}({\cat J})$ is an anti-variety relative to ${\cat K}$. Moreover, if ${\mathbb H}_{\cat K}^{\gets}({\cat J})$ is strictly elementary with respect to ${\cat K}$, then there is a sentence consisting of a finite conjunction anti-identities that is logically equivalent to ${\phi}$ modulo $\Sigma$.
\end{thm}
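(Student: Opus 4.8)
The plan is to prove the two assertions separately, both by standard compactness-style arguments. For the first assertion, that $\mathbb{H}_{\cat K}^{\gets}({\cat J})$ is an anti-variety relative to ${\cat K}$, I would let $\Gamma$ be the set of all anti-identities in ${\mathcal L}$ that hold in every member of ${\cat J}$, and claim that $\mathbb{H}_{\cat K}^{\gets}({\cat J}) = {\cat K} \cap \Mod(\Gamma)$. The inclusion from left to right is easy: an anti-identity is a universal sentence built from negated atomic formulas, so it is preserved under taking substructures and, being essentially a "forbidden configuration" statement, is preserved under homomorphic pre-images; hence if ${\mathbf B}$ maps homomorphically into some ${\mathbf J}\in{\cat J}$ then ${\mathbf B}$ satisfies every anti-identity true in ${\mathbf J}$, and in particular every member of $\Gamma$. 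The reverse inclusion is the substantive direction: given ${\mathbf B}\in{\cat K}$ satisfying every anti-identity in $\Gamma$, I must produce ${\mathbf J}\in{\cat J}$ and a homomorphism ${\mathbf B}\to{\mathbf J}$.

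For that reverse inclusion the standard device is a diagram argument. Introduce a new constant for each element of $B$ and consider the set $D$ of all atomic ${\mathcal L}_{\mathbf B}$-sentences true in ${\mathbf B}$ (the positive diagram of ${\mathbf B}$). A homomorphism from ${\mathbf B}$ into some member of ${\cat J}$ exists if and only if the theory $\Th({\cat J}) \cup D$ is satisfiable, where $\Th({\cat J})$ is the set of ${\mathcal L}$-sentences true in every member of ${\cat J}$ — more precisely, one wants a model of $\Th({\cat J})\cup D$, and since any model of $\Th({\cat J})$ need not lie in ${\cat J}$, one argues instead that satisfiability of this theory gives a homomorphism into \emph{some} model elementarily equivalent to members of ${\cat J}$, which is enough because the conclusion ${\mathbf J}\in{\cat J}$ should be read modulo the fact that ${\cat J}$ here is an arbitrary subclass. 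By compactness, $\Th({\cat J})\cup D$ is unsatisfiable only if some finite conjunction of sentences from $\Th({\cat J})$ together with finitely many atomic sentences $\alpha_1(\bar b),\dots,\alpha_m(\bar b)$ from $D$ is unsatisfiable; negating and pushing the constants back to universally quantified variables yields an anti-identity $\forall \bar x\,[\neg\alpha_1(\bar x)\vee\cdots\vee\neg\alpha_m(\bar x)]$ that is a logical consequence of $\Th({\cat J})$, hence lies in $\Gamma$, yet fails in ${\mathbf B}$ — contradicting the hypothesis that ${\mathbf B}\models\Gamma$. The main obstacle here is being careful about exactly what "homomorphism into a member of ${\cat J}$" means when ${\cat J}$ is not elementary; the cleanest fix is to take $\Th({\cat J})$ to be the anti-identity theory (or the $\forall$-positive-negated theory) that is actually preserved, so that a model of it does witness the required homomorphism after passing through the substructure generated by the interpreted constants.

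For the second assertion, suppose in addition that $\mathbb{H}_{\cat K}^{\gets}({\cat J}) = {\cat K}\cap\Mod(\phi)$ for a single first-order sentence $\phi$. From the first part, ${\cat K}\cap\Mod(\phi) = {\cat K}\cap\Mod(\Gamma)$, so modulo $\Sigma$ the sentence $\phi$ is equivalent to the (generally infinite) set $\Gamma$. Then $\Sigma\cup\Gamma\models\phi$, and by compactness there is a \emph{finite} subset $\Gamma_0\subseteq\Gamma$ with $\Sigma\cup\Gamma_0\models\phi$. Conversely $\Sigma\cup\{\phi\}\models\psi$ for every $\psi\in\Gamma$, in particular for every $\psi\in\Gamma_0$, so $\Sigma\cup\{\phi\}\models\bigwedge\Gamma_0$. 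Hence $\phi$ and the finite conjunction $\bigwedge\Gamma_0$ of anti-identities are logically equivalent modulo $\Sigma$, which is exactly the claim. I expect the bookkeeping around the semantics of ${\mathbb H}_{\cat K}^{\gets}$ for non-elementary ${\cat J}$ to be the only delicate point; everything else is a routine double application of the compactness theorem.
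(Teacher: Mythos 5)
Your overall plan (positive diagram plus compactness) is the right one, and your treatment of the ``Moreover'' clause is essentially the paper's, but there is a genuine gap in the first half that you notice and then do not actually resolve.

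The issue is where your compactness argument lands. You build a model of $\Th({\cat J})\cup D$ (or of $\Gamma\cup D$), obtain a homomorphism of $\mathbf{B}$ into it, and then must argue that this target ``witnesses the required homomorphism.'' It does not. A model of $\Th({\cat J})$, or even of the anti-identity fragment $\Gamma=\Th_{\textup{anti}}({\cat J})$, need not admit a homomorphism into any member of ${\cat J}$ -- and deciding when it does is exactly the statement of the theorem, so the argument becomes circular. Passing to the substructure generated by the interpreted constants does not help either: substructures preserve universal sentences, so you still only get a model of $\Gamma$, not a member of ${\mathbb H}_{\cat K}^\gets({\cat J})$. (To see concretely that something must be used: take $\Sigma=\emptyset$ and ${\cat J}$ the class of all finite loop-free graphs; then $\Gamma$ is generated by $\forall x\,\neg E(x,x)$, and $K_\omega$ models $\Gamma$ but has no homomorphism into any member of ${\cat J}$.)

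The paper avoids this by running the compactness argument \emph{inside} the class it wants to land in. It forms ${\cat J}^*_{\mathbf A}$, the class of ${\mathcal L}_{\mathbf A}$-structures whose ${\mathcal L}$-reduct actually lies in ${\cat J}^*={\mathbb H}_{\cat K}^\gets({\cat J})$. Under the assumption that ${\cat J}^*$ is strictly elementary relative to ${\cat K}$ (the ``Moreover'' hypothesis) together with ${\cat K}=\Mod(\Sigma)$, this is an elementary class, so compactness -- fed by the observation that every finite piece of ${\mathbf T}_{\mathbf A}$ is satisfiable in some expansion of a member of ${\cat J}^*$ -- produces a structure $\mathbf{Y}$ that really belongs to ${\cat J}^*$. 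One then composes $\mathbf{A}\to\mathbf{Y}\to\mathbf{J}$ for some $\mathbf{J}\in{\cat J}$. That is the move your proposal is missing: you must use the finite axiomatisability of ${\mathbb H}_{\cat K}^\gets({\cat J})$ relative to ${\cat K}$ to control where the diagram argument lands, rather than relying on the theory of ${\cat J}$ alone. Once that is in place, your second paragraph (finite subset $\Gamma_0$, mutual entailment modulo $\Sigma$) goes through exactly as the paper does it.
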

\begin{proof}
 We first show that ${\mathbb H}_{\cat K}^{\gets}({\cat J})$ is axiomatisable by a set of anti-identities with respect to the class ${\cat K}$. The second part of the theorem will follow by compactness.

Let ${\cat J}^*={\mathbb H}_{\cat K}^{\gets}({\cat J})$ and let $\Th_{\text{anti}}({\cat J})$ be the set of anti-identities true in ${\cat J}$. Clearly, if ${\mathbf A}\in {\mathbb H}_{\cat K}^{\gets}({\cat J})$, then ${\mathbf A}\models\Th_{\text{anti}}({\cat J})$, since anti-identities are preserved under taking homomorphic pre-images.
Now let ${\mathbf A}\in {\cat K}$ and suppose that ${\mathbf A}\models\Th_{\text{anti}}({\cat J})$. Let ${\mathbf T}_{\mathbf A}$ be the set of atomic sentences true in ${\mathbf A}$ in the expanded signature ${\mathcal L}_{\mathbf A}$, and let ${\cat J}^*_{\mathbf A}$ be the class of all ${\mathcal L}_{\mathbf A}$-structures whose ${\mathcal L}$-reduct belongs to ${\cat J}^*$.
For any finite subset $\{\phi_1(a_1, \dots, a_{n}), \dots, \phi_{k}(a_1,\dots, a_{n})\}$ of ${\mathbf T}_{\mathbf A}$, we have
\[
{\mathbf A}\models \phi_1(a_1,\dots, a_{n})\wedge \dots \wedge \phi_{k}(a_1,\dots, a_{n}).
\]
So, ${\mathbf A}$ fails the sentence
\[
\sigma \equiv \forall x_1,\dots, x_{n}\ [\neg\phi_1(x_1, \dots, x_{n})\vee \dots \vee \neg\phi_{k}(x_1,\dots, x_{n})].
\]
Hence there is some structure ${\mathbf B}\in {\cat J}^*$ that fails $\sigma$.

Let $b_1, \dots, b_{n}$ be the elements of $B$ that witness the failure of the sentence $\sigma$. Now expand ${\mathbf B}$ in the signature ${\mathcal L}_{\mathbf A}$ under the following interpretation: let $b$ be any element of $B$, and define $a_i^{\mathbf B}:=b_i$, if $i\in\{1, \dots, {n}\}$, and $a_i^{\mathbf B}:=b$, otherwise. Then ${\mathbf B}_{\mathbf A}\in {\cat J}^*_{\mathbf A}$, and satisfies
\[
\phi_1(a_1,\dots, a_{n})\wedge \dots \wedge \phi_{k}(a_1,\dots, a_{n}).
\]
Since ${\cat K}$ is elementary and ${\cat J}^*$ is first order definable relative to ${\cat K}$ it follows, by compactness, that there is some ${\mathbf Y}_{\mathbf A}\in {\cat J}^*_{\mathbf A}$ such that ${\mathbf Y}_{\mathbf A}\models {\mathbf T}_{\mathbf A}$. Now let $f:A\to Y$ be the map that sends each element $a_i$ of ${\mathbf A}$ to its corresponding interpretation in ${\mathbf Y}_{\mathbf A}$. Then, for every atomic formula $\phi(x_1,\dots, x_n)$ in ${\mathcal L}$ and every $n$-tuple $(a_1,\dots, a_n)\in A^n$, we have
\[
{\mathbf A}\models \phi(a_1,\dots, a_n) \Longrightarrow {\mathbf Y}\models \phi(f(a_1),\dots f(a_n)).
\]
Thus, the map $f:A\to Y$ is a homomorphism from ${\mathbf A}$ to ${\mathbf Y}$. Since ${\mathbf Y}\in {\cat J}^*$, there exists a homomorphism $g$ from ${\mathbf Y}$ to some member ${\mathbf J}$ of $\cat J$, and so $g\circ f$ is a homomorphism from ${\mathbf A}$ to ${\mathbf J}$, thus ${\mathbf A}\in {\cat J}^*$.
Hence we have shown that there is a set $\Phi$ of anti-identities such that ${\mathbb H}_{\cat K}^{\gets}({\cat J})={\cat K} \cap \Mod({\Phi})$, namely $\Phi=\Th_{\text{anti}}({\cat J})$.
Now assume there is a single first-order sentence $\phi$ such that ${\mathbb H}_{\cat K}^{\gets}({\cat J})={\cat K} \cap \Mod(\phi)$. Then every model of $\Sigma \cup \Phi$ is also a model of $\Sigma \cup \{\phi\}$. By the Completeness Theorem of first-order logic, we get that 
$\Sigma \cup \Phi\vdash \{\phi\}$. It then follows, by compactness, that there is a finite (possibly empty) set of anti-identities $F\subseteq \Phi$ such $\Sigma \cup F\vdash \phi$. Thus, the sentence $\phi$ is logically equivalent to the conjunction of anti-identities $\bigwedge F$ modulo $\Sigma$.
\end{proof}

\section{Failure at the finite level}\label{section:SectionAlg}
We now show that the finite-level version of the Relativised Homomorphism Preservation Theorem~\ref{thm:relhompresthm} fails for algebraic signatures.  If we restrict the class ${\cat K}$ to all finite bounded lattices, then we can find many classes closed under taking homomorphic pre-images that are first-order definable relative to ${\cat K}$, but not definable by universal sentences.
\begin{thm}\label{thm:algfail}
Let ${\cat K}$ be the class of finite bounded lattices in the signature ${\mathcal L}=\{\vee, \wedge, 0, 1\}$ and let ${\mathbf L}$ be any lattice in ${\cat K}$ with $|L|\ge 2$ . The class ${\mathbb H}_{\cat K}^{\gets}({\mathbf L})$ of all finite bounded lattices admitting a homomorphism into ${\mathbf L}$ is strictly first-order definable relative to ${\cat K}$ but not definable by any universal sentence.
\end{thm}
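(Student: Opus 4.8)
The plan is to prove the two halves by very different means: strict first-order definability is obtained by exhibiting an explicit sentence, while the failure of universal definability is an Ehrenfeucht--Fra\"{\i}ss\'e argument exploiting that ${\cat K}$ contains only the \emph{finite} bounded lattices.

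For the first half, note that a homomorphism from a finite bounded lattice ${\mathbf A}$ into ${\mathbf L}$ factors as a surjection of ${\mathbf A}$ onto a $\{0,1\}$-sublattice of ${\mathbf L}$ followed by an inclusion, and ${\mathbf L}$ has only finitely many $\{0,1\}$-sublattices up to isomorphism, say ${\mathbf S}_1,\dots,{\mathbf S}_r$. So it is enough to show that for each \emph{fixed} finite lattice ${\mathbf S}$ the class of finite bounded lattices admitting a surjection onto ${\mathbf S}$ is first-order definable relative to ${\cat K}$; the required sentence $\phi$ is the disjunction of these over ${\mathbf S}_1,\dots,{\mathbf S}_r$. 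Here I would use finiteness of ${\mathbf A}$ to replace a surjection $q\colon{\mathbf A}\onto{\mathbf S}$ by a \emph{bounded} certificate: for each $s\in S$ the preimage $q^{-1}({\uparrow}s)$ is a non-empty filter of the finite lattice ${\mathbf A}$, hence principal, say $q^{-1}({\uparrow}s)={\uparrow}c_s$; conversely $q$ is recovered from the tuple $(c_s)_{s\in S}$ by $q(a)=\bigvee_{\mathbf S}\{\,s\in S : a\ge c_s\,\}$. Because ${\mathbf S}$ is a fixed finite lattice whose operation table can be hard-wired into the formula, the statement ``the tuple $(c_s)_{s\in S}$ is order preserving and the induced map is a well-defined surjective lattice homomorphism onto ${\mathbf S}$'' unwinds into finitely many first-order conditions on $(c_s)_{s\in S}$; so ``$\exists(c_s)_{s\in S}$ satisfying these'' is a first-order sentence which, on finite bounded lattices, says exactly that ${\mathbf A}$ surjects onto ${\mathbf S}$. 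The base case ${\mathbf S}=\mathbf 2$ is the familiar statement that a finite bounded lattice maps onto $\mathbf 2$ iff it possesses a join-prime element other than $0$ (the meet of a prime filter), and the general case is its bookkeeping-heavy generalisation.

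For the second half, first observe that ${\mathbb H}_{\cat K}^{\gets}({\mathbf L})$ \emph{is} closed under subalgebras --- one simply restricts a homomorphism --- so nothing prevents universal definability on the level of preservation properties; the failure is purely a finite-level phenomenon. I would establish it by producing, for every $N$, finite bounded lattices ${\mathbf A}_N\in{\mathbb H}_{\cat K}^{\gets}({\mathbf L})$ and ${\mathbf B}_N\notin{\mathbb H}_{\cat K}^{\gets}({\mathbf L})$ with the property that every universal sentence using at most $N$ variables and terms of complexity at most $N$ that holds in ${\mathbf A}_N$ also holds in ${\mathbf B}_N$. (Bounding term complexity is legitimate because any fixed candidate sentence has both invariants bounded, and the lattices ${\mathbf A}_N,{\mathbf B}_N$ may be chosen after $N$, hence as large as desired relative to $N$; on a fixed finite lattice each term operation is moreover computed by a term of bounded length.) Given this, a short Ehrenfeucht--Fra\"{\i}ss\'e-style analysis shows no universal sentence can define ${\mathbb H}_{\cat K}^{\gets}({\mathbf L})$ relative to ${\cat K}$: a putative defining sentence $\psi$ has some rank and term complexity; take $N$ exceeding both; then ${\mathbf A}_N\models\psi$ forces ${\mathbf B}_N\models\psi$, so ${\mathbf B}_N\in{\mathbb H}_{\cat K}^{\gets}({\mathbf L})$, a contradiction. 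For the pairs themselves a natural starting point is the supply of obstructions coming from finite projective geometries (or similar highly symmetric non-distributive lattices): for a prime $p$ large relative to $|L|$ the subspace lattice of $\mathrm{PG}(2,p)$ admits no homomorphism into ${\mathbf L}$ --- it is simple and non-distributive, hence has no homomorphism even onto $\mathbf 2$, and is too large to embed into ${\mathbf L}$ --- and for distinct primes these lattices are pairwise homomorphically incomparable. One then takes ${\mathbf B}_N$ to be such a lattice and ${\mathbf A}_N$ a controlled modification of it that acquires a homomorphism onto $\mathbf 2$ (and hence into ${\mathbf L}$) while disturbing only the part of the lattice that no bounded play can reach.

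The main obstacle is this last construction: a modification of a projective-geometry lattice that (i) creates a homomorphism into ${\mathbf L}$ yet (ii) leaves the bounded-radius local structure --- including the local structure at the distinguished constants $0$ and $1$ --- unchanged, so that Duplicator can survive enough rounds. Reconciling (i) and (ii) is delicate, since the obvious repairs either fail to introduce a homomorphism or do so by altering the neighbourhood of $0$ or $1$ in a way detectable by a very short universal sentence; in particular one must ensure the repaired lattice does not contain a large simple $\{0,1\}$-sublattice, which would undo (i). The first half is routine by comparison once the join-prime-style certificate is chosen, the only real work being the (lengthy) verification that the first-order conditions on $(c_s)_{s\in S}$ capture exactly the existence of a surjection onto ${\mathbf S}$.
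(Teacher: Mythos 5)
Your proposal for the second half contains a genuine gap, which you yourself flag: you never produce the modified lattice ${\mathbf A}_N$ that is supposed to stand alongside the projective-geometry lattice ${\mathbf B}_N$, and you acknowledge that reconciling ``acquires a homomorphism into ${\mathbf L}$'' with ``leaves the bounded-radius local structure at $0$ and $1$ unchanged'' is delicate. Without that construction the argument does not go through. The paper sidesteps the difficulty entirely by using a different and substantially lighter strategy: instead of pairs of lattices designed to be locally indistinguishable, it exhibits, for each $n$, a \emph{single} lattice ${\mathbf A}_n \notin {\mathbb H}_{\cat K}^{\gets}({\mathbf L})$ with the property that \emph{every $n$-generated sublattice of ${\mathbf A}_n$ does lie in} ${\mathbb H}_{\cat K}^{\gets}({\mathbf L})$. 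This is enough: a universal sentence with $n$ quantified variables that fails in ${\mathbf A}_n$ already fails in the $\{0,1\}$-sublattice generated by a witnessing $n$-tuple; so if such a sentence $\phi$ defined ${\mathbb H}_{\cat K}^{\gets}({\mathbf L})$ it would fail at ${\mathbf A}_n$ but hold at that sublattice, a contradiction. No Ehrenfeucht--Fra\"{\i}ss\'e game, and no delicate surgery on local neighbourhoods, is needed — preservation of universal sentences under substructures does the whole job once you choose ${\mathbf A}_n$ cleverly. The paper's ${\mathbf A}_n$ is a vertical ``stack'' of $n$ copies of ${\mathbf M}_k$, with $k > \max\{|L|, 2\}$: simplicity of ${\mathbf M}_k$ plus $|M_k| > |L|$ rules out any homomorphism of ${\mathbf A}_n$ into ${\mathbf L}$, while avoiding a $\mathbbold{2}$-quotient requires at least three generators from each of the $n$ stacked antichains, so any $n$-generated sublattice still maps onto $\mathbbold{2}$ and hence into ${\mathbf L}$. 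You should revisit your plan with this in mind: look for one obstruction whose small $\{0,1\}$-sublattices are not obstructions, rather than two lattices that look alike.

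On the first half, your principal-filter certificate idea is workable in outline — reconstructing a surjection $q\colon{\mathbf A}\onto{\mathbf S}$ from the principal filters $q^{-1}(\uparrow s)=\uparrow c_s$ does yield a first-order description for each fixed finite ${\mathbf S}$, and disjoining over the $\{0,1\}$-sublattices of ${\mathbf L}$ then gives a single sentence. This is a genuinely different route from the paper, which simply invokes Theorem~4.2 of Clark--Davey--Jackson--Pitkethly for the $\forall\exists$-definability. Your sketch would need the tedious verification that the conditions on $(c_s)_{s\in S}$ are exactly equivalent to surjective-homomorphism-onto-${\mathbf S}$ (monotonicity, $c_0=0$, compatibility with the join and meet tables of ${\mathbf S}$, and surjectivity), but there is no obstruction of principle. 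The upshot: half (1) is fine modulo bookkeeping, half (2) is missing its crucial construction, and a simpler construction exists.
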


\begin{proof}
The class ${\mathbb H}_{\cat K}^{\gets}({\mathbf L})$ of all finite bounded lattices admitting a homomorphism into ${\mathbf L}$ is definable by a $\forall\exists$ sentence $\phi$ by \cite[Theorem~4.2]{CDJP}. We will show that the class ${\mathbb H}_{\cat K}^{\gets}({\mathbf L})$ is not definable by any universal sentence. Specifically, we show that, for each $n\in {\mathbb N}$, there exists a finite bounded lattice ${\mathbf A}_n$ satisfying the following two conditions.
\begin{enumerate}
\item ${\mathbf A}_n\notin{\mathbb H}_{\cat K}^{\gets}({\mathbf L})$;
\item every $n$-generated sublattice of ${\mathbf A}_n$ belongs to ${\mathbb H}_{\cat K}^{\gets}({\mathbf L})$.
\end{enumerate}
For each $n\in {\mathbb N}$, let ${\mathbf A}_n$ be the lattice depicted in Figure~\ref{figure:A_n}. The lattice ${\mathbf A}_n$ is a ``stack'' of $n$ copies of ${\mathbf M}_{k}$, where $k$ is a natural number strictly greater than $\max\{|L|, 2\}$.

($1$): Let $n\in {\mathbb N}$. The smallest lattice that ${\mathbf A}_n$ collapses to, without identifying $0$ and $1$, is isomorphic to the lattice ${\mathbf M}_{k}$. Since ${\mathbf M}_{k}$ is simple (as $k\ge 3$), every congruence on ${\mathbf M}_{k}$ collapses everything or collapses nothing. Thus if ${\mathbf A}_n$ is to map homomorphically into ${\mathbf L}$, then ${\mathbf L}$ must contain a sublattice isomorphic to ${\mathbf M}_{k}$, but this is not possible since ${\mathbf M}_{k}$ contains more elements than ${\mathbf L}$. Hence ${\mathbf A}_{n}\notin{\mathbb H}_{\cat K}^{\gets}({\mathbf L})$.

($2$): We will show that every $n$-generated sublattice of ${\mathbf A}_n$ has a homomorphism onto the two-element lattice $\mathbbold{2}$, and therefore has a homomorphism into ${\mathbf L}$. Assume that we are selecting $n$ generators from the lattice ${\mathbf A}_{n}$. In order to generate a sublattice of ${\mathbf A}_n$ that does not admit a homomorphism onto $\mathbbold{2}$ (or equivalently, a sublattice that does not contain a prime ideal), we would need to select \emph{at least} three elements from each of the $n$ maximal anti-chains in ${\mathbf A}_n$. Thus, the lattice ${\mathbf P}$ given in Figure~\ref{figure:P} is the smallest sublattice of ${\mathbf A}_n$ that does not admit a homomorphism onto $\mathbbold{2}$. But this lattice is $3n$-generated and not generated by fewer than $3n$ elements. Hence every $n$-generated a sublattice of ${\mathbf A}_n$ admits a homomorphism onto $\mathbbold{2}$, and therefore into ${\mathbf L}$.
\end{proof}

\begin{remark}
Notice that within the class of finite bounded lattices, the (first-order definable) property of admitting a homomorphism onto ${\mathbbold 2}$ is preserved by taking sublattices, and so the above examples also give us a counterexample to the finite-level relativised {\L}os-Tarski Preservation Theorem for algebraic signatures.
\end{remark}
\begin{remark}
Also notice that the proof of Theorem~\ref{thm:algfail} works for the class ${\mathbb H}_{\cat K}^{\gets}({\cat L})$, where ${\cat L}$ is any class of finite bounded lattices containing at least one non-trivial lattice, and for which there exists $k\ge 3$ such that ${\mathbf M}_{k}\notin {\mathbb S}({\cat L})$.
\end{remark}
\begin{remark}
The proof of Theorem \ref{thm:algfail} also works with ${\cat K}$ replaced by any class of finite lattices containing the lattices in Figure \ref{figure:A_n} (for each $n\in\mathbb{N}$).  As the lattice ${\mathbf A}_n$ is modular, the class ${\cat K}$ in Theorem~\ref{thm:algfail} could be taken to be the class of all finite bounded modular lattices.  Alternatively, ${\cat K}$ could to be the finite part of the variety generated by the bounded lattices ${\mathbf A}_n$.  This variety can be shown to coincide with the variety generated by ${\mathbf M}_{\omega}$.
\end{remark}
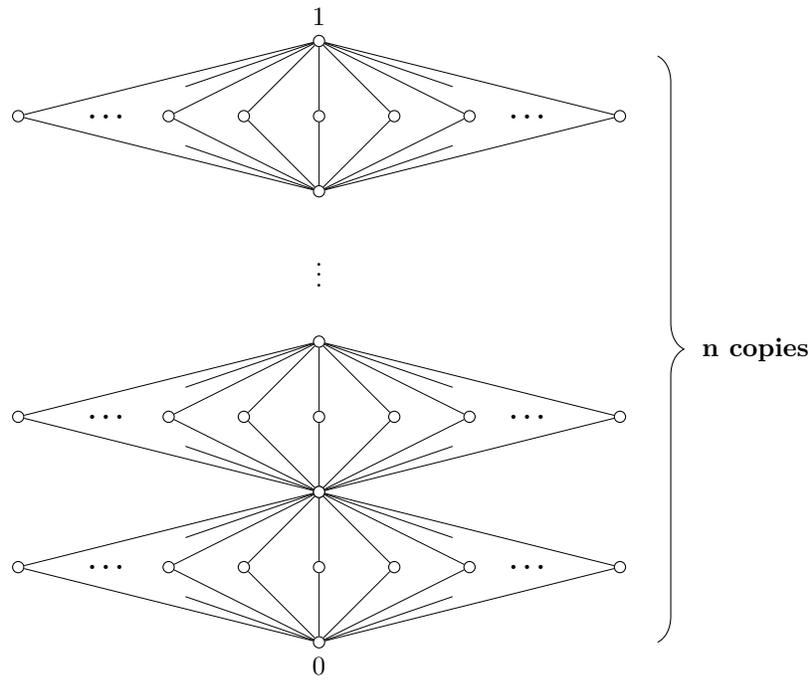
\begin{figure}
\begin{tikzpicture}
 [
 unshaded/.style={draw, shape=circle, fill=white, inner sep=1.5pt},
 order/.style={very thin}
 ]
   \begin{scope}[xshift=9cm]
       label
      \node at (0,0) {};
      \node[label=below:$0$, unshaded] (bot) at (0,0) {};
      \node[unshaded] (2) at (-2,1) {};
      \node[unshaded] (8) at (-4,1) {};
      \node[] at (2.8,1) {$\boldsymbol\dots$};
      \node[] at (-2.8,1) {$\boldsymbol\dots$};
      \node[unshaded] (3) at (2,1) {};
      \node[unshaded] (9) at (4,1) {};
      \node[unshaded] (5) at (-1,1) {};
      \node[unshaded] (6) at (0,1) {};
      \node[unshaded] (7) at (1,1) {};
      \node[unshaded] (top) at (0,2) {};
      \node[] (half1) at (-1.9,1.35) {};
      \node[] (half2) at (1.9,1.35) {};
      \node[] (half3) at (-1.9,.65) {};
      \node[] (half4) at (1.9,.65) {};
      \draw[order] (bot) to (2);
      \draw[order] (bot) to (3);
      \draw[order] (bot) to (5);
      \draw[order] (bot) to (6);
      \draw[order] (bot) to (7);
      \draw[order] (bot) to (8);
      \draw[order] (bot) to (9);
       \draw[order] (bot) to (half3);
      \draw[order] (bot) to (half4);
      \draw[order] (2) to (top);
      \draw[order] (3) to (top);
      \draw[order] (5) to (top);
      \draw[order] (6) to (top);
      \draw[order] (7) to (top);
      \draw[order] (8) to (top);
      \draw[order] (9) to (top);
      \draw[order] (half1) to (top);
      \draw[order] (half2) to (top);
      \end{scope}
     \begin{scope}[xshift=9cm, yshift=2cm]
      \node[unshaded] (bot) at (0,0) {};
      \node[unshaded] (2) at (-2,1) {};
      \node[unshaded] (8) at (-4,1) {};
      \node[] at (2.8,1) {$\boldsymbol\dots$};
      \node[] at (-2.8,1) {$\boldsymbol\dots$};
      \node[unshaded] (3) at (2,1) {};
      \node[unshaded] (9) at (4,1) {};
      \node[unshaded] (5) at (-1,1) {};
      \node[unshaded] (6) at (0,1) {};
      \node[unshaded] (7) at (1,1) {};
      \node[unshaded] (top) at (0,2) {};
      \node[] (half1) at (-1.9,1.35) {};
      \node[] (half2) at (1.9,1.35) {};
      \node[] (half3) at (-1.9,.65) {};
      \node[] (half4) at (1.9,.65) {};
      \draw[order] (bot) to (2);
      \draw[order] (bot) to (3);
      \draw[order] (bot) to (5);
      \draw[order] (bot) to (6);
      \draw[order] (bot) to (7);
      \draw[order] (bot) to (8);
      \draw[order] (bot) to (9);
       \draw[order] (bot) to (half3);
      \draw[order] (bot) to (half4);
      \draw[order] (2) to (top);
      \draw[order] (3) to (top);
      \draw[order] (5) to (top);
      \draw[order] (6) to (top);
      \draw[order] (7) to (top);
      \draw[order] (8) to (top);
      \draw[order] (9) to (top);
      \draw[order] (half1) to (top);
      \draw[order] (half2) to (top);
    \end{scope}
   \begin{scope}[xshift=9cm]
      \node at (0,5) {$\boldsymbol\vdots$};
    \end{scope}

    \begin{scope}[xshift=9cm, yshift=6cm]
      \node[unshaded] (bot) at (0,0) {};
      \node[unshaded] (2) at (-2,1) {};
      \node[unshaded] (8) at (-4,1) {};
      \node[] at (2.8,1) {$\boldsymbol\dots$};
      \node[] at (-2.8,1) {$\boldsymbol\dots$};
      \node[unshaded] (3) at (2,1) {};
      \node[unshaded] (9) at (4,1) {};
      \node[unshaded] (5) at (-1,1) {};
      \node[unshaded] (6) at (0,1) {};
      \node[unshaded] (7) at (1,1) {};
      \node[label=$1$, unshaded] (top) at (0,2) {};
      \node[] (half1) at (-1.9,1.35) {};
      \node[] (half2) at (1.9,1.35) {};
      \node[] (half3) at (-1.9,.65) {};
      \node[] (half4) at (1.9,.65) {};
      \draw[order] (bot) to (2);
      \draw[order] (bot) to (3);
      \draw[order] (bot) to (5);
      \draw[order] (bot) to (6);
      \draw[order] (bot) to (7);
      \draw[order] (bot) to (8);
      \draw[order] (bot) to (9);
       \draw[order] (bot) to (half3);
      \draw[order] (bot) to (half4);
      \draw[order] (2) to (top);
      \draw[order] (3) to (top);
      \draw[order] (5) to (top);
      \draw[order] (6) to (top);
      \draw[order] (7) to (top);
      \draw[order] (8) to (top);
      \draw[order] (9) to (top);
      \draw[order] (half1) to (top);
      \draw[order] (half2) to (top);
    \end{scope}
    \begin{scope}[xshift=13.5cm, yshift=0cm]
    \draw [decorate,decoration={brace,amplitude=10pt}]
    (0,7.8)--(0,0) node[midway, right, xshift=10pt] {\ \textbf{n copies}};
    \end{scope}
\end{tikzpicture}
\caption{The lattice ${\mathbf A}_n$.}\label{figure:A_n}
\end{figure}

\begin{figure}
\begin{tikzpicture}
 [
 unshaded/.style={draw, shape=circle, fill=white, inner sep=1.5pt},
 order/.style={very thin}
 ]
   \begin{scope}[xshift=9cm]
      \node[label=below:$0$, unshaded] (bot) at (0,0) {};
      \node[unshaded] (1) at (-1,1) {};
      \node[unshaded] (2) at (0,1) {};
      \node[unshaded] (3) at (1,1) {};
      \node[unshaded] (top) at (0,2) {};
      \draw[order] (bot) to (1);
      \draw[order] (bot) to (2);
      \draw[order] (bot) to (3);
      \draw[order] (1) to (top);
      \draw[order] (2) to (top);
      \draw[order] (3) to (top);
   \end{scope}
   \begin{scope}[xshift=9cm, yshift=2cm]
      \node[unshaded] (bot) at (0,0) {};
      \node[unshaded] (1) at (-1,1) {};
      \node[unshaded] (2) at (0,1) {};
      \node[unshaded] (3) at (1,1) {};
      \node[unshaded] (top) at (0,2) {};
      \draw[order] (bot) to (1);
      \draw[order] (bot) to (2);
      \draw[order] (bot) to (3);
      \draw[order] (1) to (top);
      \draw[order] (2) to (top);
      \draw[order] (3) to (top);
   \end{scope}
   \begin{scope}[xshift=9cm]
      \node at (0,5) {$\boldsymbol\vdots$};
    \end{scope}
    \begin{scope}[xshift=9cm, yshift=6cm]
      \node[unshaded] (bot) at (0,0) {};
      \node[unshaded] (1) at (-1,1) {};
      \node[unshaded] (2) at (0,1) {};
      \node[unshaded] (3) at (1,1) {};
      \node[label=$1$, unshaded] (top) at (0,2) {};
      \draw[order] (bot) to (1);
      \draw[order] (bot) to (2);
      \draw[order] (bot) to (3);
      \draw[order] (1) to (top);
      \draw[order] (2) to (top);
      \draw[order] (3) to (top);
   \end{scope}
    \begin{scope}[xshift=10.5cm, yshift=0cm]
    \draw [decorate,decoration={brace,amplitude=10pt}]
    (0,7.8)--(0,0) node[midway, right, xshift=10pt] {\ \textbf{n copies}};
\end{scope}


\end{tikzpicture}
\caption{The sublattice ${\mathbf P}$ of ${\mathbf A}_n$.}\label{figure:P}
\end{figure}
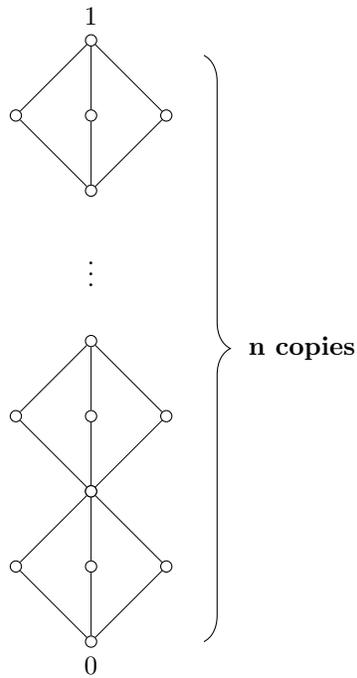

\section{A Relativised Homomorphism Preservation Theorem} \label{section:SectRelational}
We now turn to the purely relational side of things. In this section, we establish a finite-level version of the Relativised Homomorphism Preservation Theorem for relational signatures.  We present this result as Theorem~\ref{thm:Sausage} below. We first prove the non-relativised version of Theorem~\ref{thm:Sausage}, namely Theorem~\ref{thm:Atserias}, from which Theorem~\ref{thm:Sausage} will follow almost immediately. The proof requires three constructions given in Definition~\ref{defn:nlink}, Definition~\ref{defn:unpinch} and Definition~\ref{defn:npinch} below, borrowed from \cite{LLT}. The quotient structure given in Definition~\ref{defn:npinch}, will play a key role in establishing Theorem~\ref{thm:Sausage}. We follow Jackson and Trotta~\cite{JackTrot} and refer to this quotient structure as the \emph{$n$-pinch} over ${\mathbf A}$. In \cite{LLT}, Larose, Loten and Tardif show that, for a relational structure ${\mathbf A}$, demonstrating membership of the $n$-pinch in $\CSP({\mathbf A})$, for some natural number $n$, is equivalent to $\CSP({\mathbf A})$ admitting the finite duality property. This fact will be crucial in the proof of Theorem~\ref{thm:Sausage}.

We conclude the section by extending Theorem~\ref{thm:Sausage}.  More specifically, we obtain a complete characterisation of not only first-order definable principal colour-families but also first-order definable colour-families in classes ${\cat K}$ closed under forming the $n$-pinch construction and taking disjoint unions; see Theorem~\ref{thm:colourfam}.

In this section, we assume that all structures are finite in a finite purely relational signature ${\mathcal R}=\{R_1, \dots, R_m\}$ with $m\in {\mathbf N}\cup\{0\}$ and $R_i$ of arity $r_i$, for all $i\in\{1,\dots, m\}$, unless otherwise stated.
\begin{defn}\label{defn:nlink}
Let $n,m\in {\mathbb N}\cup\{0\}$, the \emph{$n$-link} of signature ${\mathcal R}=\{R_1, \dots, R_m\}$ is defined to be the structure
\[
{\mathbf L}_n =\langle \{0, 1, \dots, n\};{R_1}^{{\mathbf L}_n}, \dots, {R_m}^{{\mathbf L}_n}\rangle,
\]
where ${R_i}^{{\mathbf L}_n}=\bigcup_{j=1}^{n}{\{j-1, j\}}^{r_i}$, for all $i\in\{1, \dots, m\}$.
\end{defn}

\begin{defn}\label{defn:unpinch}
Let $n\in {\mathbb N}$ and let ${\mathbf A}$ be a structure in the signature ${\mathcal R}=\{R_1, \dots, R_m\}$. Consider the product structure ${\mathbf L}_n\times{\mathbf A}\times {\mathbf A}$. We define an equivalence relation $\sim_n$ on ${\mathbf L}_n\times{\mathbf A}\times {\mathbf A}$ as follows:
\[
(i, a, b) \sim_n (i', a', b') \Longleftrightarrow
\begin{cases} (i, a, b)= (i', a', b'),\quad&\text{or}\\
i=i'=0 \quad\text{and}\quad a=a',\quad&\text{or}\\
i=i'=n \quad\text{and}\quad b=b'.
\end{cases}
\]
\end{defn}
Now using Definition~\ref{defn:nlink} and Definition~\ref{defn:unpinch} we construct the following quotient structure.
\begin{defn}\label{defn:npinch}
Let $n\in {\mathbb N}$ and let ${\mathbf A}$ be a structure in the signature ${\mathcal R}=\{R_1, \dots, R_m\}$, the \textbf{$n$-pinch} over ${\mathbf A}$ is defined to be the structure

\[
{\mathbf P}_n({\mathbf A})=({\mathbf L}_n\times{\mathbf A}\times {\mathbf A})/{\sim_n}.
\]
\end{defn}

Observe that the first coordinate of the elements in ${\mathbf L}_n\times{\mathbf A}\times {\mathbf A}$ is constant on each of the ${\sim_n}$-equivalence classes. In other words, the first coordinate is unchanged by the formation of the quotient structure ${\mathbf P}_n({\mathbf A})$. We therefore use the first coordinate as a means for specifying the position of an element in a copy of ${\mathbf A}$ or ${\mathbf A}\times {\mathbf A}$ in the $n$-pinch: notationally, we define a function $\iota:{\mathbf P}_n({\mathbf A})\to \{0,\dots,n\}$ by $\iota((i,a,b)/{\sim}_{n}):=i$.
We refer to the copy of ${\mathbf A}$ in ${\mathbf P}_n({\mathbf A})$ at position $i=0$ as the \emph{left pinch} of ${\mathbf P}_n({\mathbf A})$, and we refer to the copy of ${\mathbf A}$ at position $i=n$ as the \emph{right pinch} of ${\mathbf P}_n({\mathbf A})$.
We denote the substructures of ${\mathbf P}_n({\mathbf A})$ induced by the sets $B_0=\{(k,a,b)/{\sim_n} |\ k\ne0\}$ and $B_n=\{(k,a,b)/{\sim_n} |\  k\ne n\}$ as ${\mathbf B}_R$ and ${\mathbf B}_L$, respectively. We refer to the copy of ${\mathbf A}\times {\mathbf A}$ in ${\mathbf B}_R$ at position $i=1$ as the \emph{right slice} of ${\mathbf B}_R$, and we refer to the copy of ${\mathbf A}\times {\mathbf A}$ in ${\mathbf B}_L$ at position $i=n-1$ as the \emph{left slice} of ${\mathbf B}_L$.

\begin{figure}
\includegraphics{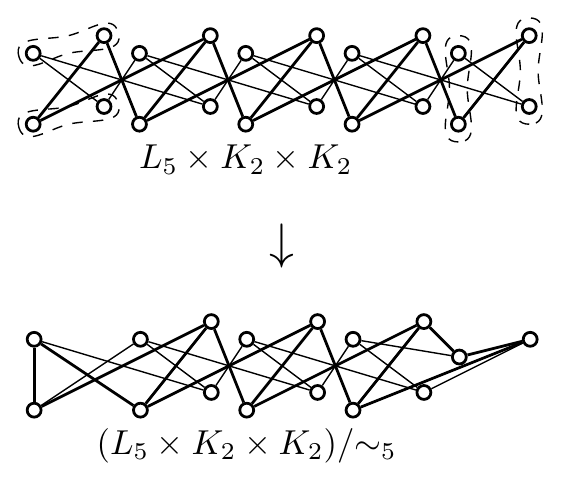}
\caption{The structure ${\mathbf P}_5({\mathbf K}_2)$ as a quotient of $L_5\times K_2\times K_2$.}\label{figure:npinchK2}
\end{figure}
\begin{example}
Let ${\mathbf K}_2$ be the complete graph on two vertices. The formation of ${\mathbf P}_5({\mathbf K}_2)$ is depicted in Figure~\ref{figure:npinchK2}. The graph of ${\mathbf P}_5({\mathbf K}_2)$ can be visualised as a M\"{o}bius strip.
\end{example}

We now collect together some facts that will be used in the proof of our main result.

\begin{thm}\label{thm:SausageFacts}
\begin{enumerate}[\quad \rm(1)] Let ${\mathbf A}=\langle A; R_1^{\mathbf A}, \dots, R_m^{\mathbf A}\rangle$ be a finite relational structure.
\item \textup(\cite{LLT}, Lemma~$4.6$\textup) For each natural number $n$, the substructures ${\mathbf B}_R$ and ${\mathbf B}_L$ of ${\mathbf P}_n({\mathbf A})$ admit homomorphisms into ${\mathbf A}$.\label{itm:part1}
\item \textup(\cite{LLT} Theorem~$2.5$ and Theorem~$4.7$\textup)\label{itm:PinchEquiv} The following are equivalent\textup:
\begin{itemize}
\item $\CSP({\mathbf A})$ satisfies the finite duality property\textup;
\item there exists some natural number $n$ such that ${\mathbf P}_n({\mathbf A})$ admits a homomorphism to ${\mathbf A}$.
\end{itemize}
\item \label{itm:pinchsurjhom}\textup(\cite{JackTrot} Lemma~$21$\textup)
For each $n\in {\mathbb N}$, there exists a surjective homomorphism from ${\mathbf P}_{n+1}({\mathbf A})$ to ${\mathbf P}_{n}({\mathbf A})$.
\end{enumerate}
\end{thm}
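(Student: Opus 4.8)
All three assertions are quoted from \cite{LLT} and \cite{JackTrot}, so strictly nothing needs to be proved; I nonetheless indicate how I would establish each. For part~\ref{itm:part1} I would simply write down the two homomorphisms. Recall ${\mathbf B}_R$ is induced on the classes $(k,a,b)/{\sim_n}$ with $k\ne 0$ and ${\mathbf B}_L$ on those with $k\ne n$. Set $\pi_R\big((k,a,b)/{\sim_n}\big)=b$ and $\pi_L\big((k,a,b)/{\sim_n}\big)=a$. Each is well defined because for a class with $k\ne 0$ the only identification that can occur is the one at $k=n$, which fixes the $b$-coordinate, and dually for $\pi_L$. That $\pi_R$ is a homomorphism ${\mathbf B}_R\to{\mathbf A}$ is then immediate: a tuple of $R_i^{{\mathbf B}_R}$ lifts, along the quotient map ${\mathbf L}_n\times{\mathbf A}\times{\mathbf A}\to{\mathbf P}_n({\mathbf A})$, to a tuple of $R_i^{{\mathbf L}_n\times{\mathbf A}\times{\mathbf A}}$, whose third-coordinate projection lies in $R_i^{\mathbf A}$ and is exactly the $\pi_R$-image of the original tuple; the case of $\pi_L$ is symmetric, using the first ${\mathbf A}$-coordinate.

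For part~\ref{itm:pinchsurjhom} I would fold the link. Let $g\colon\{0,\dots,n+1\}\to\{0,\dots,n\}$ be the identity on $\{0,\dots,n\}$ with $g(n+1)=n$. Checking on the generators $\{j-1,j\}^{r_i}$ of the relations of ${\mathbf L}_{n+1}$ shows $g$ is a surjective homomorphism ${\mathbf L}_{n+1}\to{\mathbf L}_n$: for $j\le n$ it is the identity on $\{j-1,j\}$, while the top generator $\{n,n+1\}^{r_i}$ is sent into $\{n\}^{r_i}\subseteq\{n-1,n\}^{r_i}\subseteq R_i^{{\mathbf L}_n}$. Then $g\times\mathrm{id}_{\mathbf A}\times\mathrm{id}_{\mathbf A}$ is a surjective homomorphism ${\mathbf L}_{n+1}\times{\mathbf A}\times{\mathbf A}\to{\mathbf L}_n\times{\mathbf A}\times{\mathbf A}$ that respects $\sim_{n+1}$ and $\sim_n$: its only nontrivial effect on identifications is to carry $(n+1,a,b)\sim_{n+1}(n+1,a',b')$, where $b=b'$, to $(n,a,b)\sim_n(n,a',b')$, which is again an identification, and it preserves the position-$0$ identifications verbatim. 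Hence it descends to a surjective homomorphism ${\mathbf P}_{n+1}({\mathbf A})\to{\mathbf P}_n({\mathbf A})$.

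For part~\ref{itm:PinchEquiv}, one implication is short and illustrates the use of part~\ref{itm:part1}. Suppose $\CSP({\mathbf A})$ has finite duality. By the duality theorem of Ne\v{s}et\v{r}il and Tardif (see also \cite{LLT}) this duality may be witnessed by a finite obstruction set $\mathcal F$ of connected trees; pick $n$ larger than every diameter occurring in $\mathcal F$. Every block of ${\mathbf P}_n({\mathbf A})$ lifts to a block of ${\mathbf L}_n\times{\mathbf A}\times{\mathbf A}$ and so is incident only to elements of two consecutive positions, whence $\dist_{{\mathbf P}_n({\mathbf A})}(u,v)\ge|\iota(u)-\iota(v)|$ for all $u,v$. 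Since homomorphisms do not increase distance, the image in ${\mathbf P}_n({\mathbf A})$ of any $\mathbf T\in\mathcal F$ has all pairwise distances below $n$, so it cannot meet both position $0$ and position $n$ and is therefore contained in ${\mathbf B}_R$ or in ${\mathbf B}_L$; by part~\ref{itm:part1} it maps to ${\mathbf A}$. As every $\mathbf T\in\mathcal F$ fails to map to ${\mathbf A}$, no member of $\mathcal F$ maps to ${\mathbf P}_n({\mathbf A})$, and since $\mathcal F$ is an obstruction set for $\CSP({\mathbf A})$ we conclude ${\mathbf P}_n({\mathbf A})\to{\mathbf A}$.

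The reverse implication --- that ${\mathbf P}_n({\mathbf A})\to{\mathbf A}$ for some $n$ already forces finite duality --- is the substantive content of \cite{LLT}, and I expect it to be the real obstacle. The point is to convert the ``collapsing'' behaviour of the $n$-pinch into an honest finiteness statement: one must show that ${\mathbf P}_n({\mathbf A})\to{\mathbf A}$ implies that every finite structure outside $\CSP({\mathbf A})$ already admits a homomorphism from a tree of diameter at most $n$ that itself lies outside $\CSP({\mathbf A})$, and then that only finitely many such trees are needed to obstruct $\CSP({\mathbf A})$; extracting this finiteness --- roughly, by passing to cores and using $|A|<\infty$ to bound the branching below element nodes --- together with the decomposition ${\mathbf P}_n({\mathbf A})={\mathbf B}_L\cup{\mathbf B}_R$ and the surjections of part~\ref{itm:pinchsurjhom} is the heart of the \cite{LLT} argument. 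Rather than attempt an independent proof I would follow \cite{LLT}, as the statement above does.
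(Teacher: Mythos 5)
Your proposal is correct and takes essentially the same route as the paper: the paper's own treatment of this theorem is a brief remark after the statement, giving exactly your projection maps $\pi_R\colon (k,a,b)/{\sim_n}\mapsto b$ and $\pi_L\colon (k,a,b)/{\sim_n}\mapsto a$ for part~(1), exactly your fold-the-link map for part~(3), and deferring the substance of part~(2) to the cited references \cite{LLT}. Your extra sketch of the ``finite duality~$\Rightarrow$~$\exists n,\ \mathbf P_n(\mathbf A)\to\mathbf A$'' direction via Ne\v{s}et\v{r}il--Tardif tree duality is a valid embellishment, though the paper (following \cite{LLT}) instead points to the fact that there are only finitely many critical obstructions of a given diameter.
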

\begin{remark}
For ($1$), it is routine to check that the map $\phi:B_R\to A$ defined by $\phi((k,a,b)/{\sim_n})=b$ is a homomorphism from ${\mathbf B}_R$ to ${\mathbf A}$ and similarly, that the map $\varphi:B_L\to A$ defined by $\phi((k,a,b)/{\sim_n})=a$ is a homomorphism from ${\mathbf B}_R$ to ${\mathbf A}$, see the proof of Lemma~$4.6$ in \cite{LLT}. The proof of ($2$) depends mostly on the observation that there are only finitely many critical obstructions of a given diameter (see Lemma~$2.4$ of \cite{LLT}. For ($3$), the map defined by \begin{equation*}
\phi_n((i, a, b)/{\sim_n})=
\begin{cases} (i, a, b)/{\sim_n}, \quad&\text{if $i\leq n$}, \\
 (n, a, b)/{\sim_n}, \quad&\text{if $i=n+1$}.
\end{cases}
\end{equation*}
is easily shown to be a surjective homomorphism, see the proof of Lemma~$21$ in \cite{LLT}.
\end{remark}
\begin{defn}
Let ${\cat K}$ be a class of ${\mathcal R}$-structures and let ${\cat F}$ be a subclass of ${\cat K}$. We say that ${\cat K}$ is \emph{closed under forming the $n$-pinch construction over ${\cat F}$}, if for every $n\in{\mathbb N}$ and every ${\mathbf F}\in{\cat F}$, we have that ${\mathbf P}_n({\mathbf F})$ and its substructures ${\mathbf B}_R$ and ${\mathbf B}_L$ belong to the class ${\cat K}$.
\end{defn}
We now state the main result for this section: a finite analogue of the Relativised Homomorphism Theorem~\ref{thm:relhompresthm} in the case where ${\cat J}$ is a single structure.
\begin{thm}\label{thm:Sausage} Let ${\cat K}$ be a class of finite $\mathcal{R}$-structures and let ${\mathbf A}\in {\cat K}$. If ${\cat K}$ is closed under forming the $n$-pinch construction over ${\mathbf A}$ and taking disjoint unions, then ${\mathbb H}_{\cat K}^{\gets}(\mathbf{A})$ is equal to ${\cat K}\cap \Mod(\phi)$ for some first order sentence $\phi$ if and only if there is a finite conjunction of anti-identities $\psi$ such that ${\mathbb H}_{\cat K}^{\gets}(\mathbf{A})={\cat K}\cap \Mod(\psi)$.
\end{thm}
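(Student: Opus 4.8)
The direction from a finite conjunction of anti-identities to first-order definability is trivial, so the content is the forward implication: assuming $\mathbb H_{\cat K}^{\gets}(\mathbf A)={\cat K}\cap\Mod(\phi)$ for some first-order $\phi$, produce the finite conjunction of anti-identities. By the Relativised Homomorphism Preservation Theorem~\ref{thm:relhompresthm} applied with ${\cat J}=\{\mathbf A\}$, the class $\mathbb H_{\cat K}^{\gets}(\mathbf A)$ is already an anti-variety relative to ${\cat K}$, axiomatised by $\Th_{\text{anti}}(\mathbf A)$; and since it is strictly elementary relative to ${\cat K}$, the ``moreover'' clause gives a \emph{finite} subset $F\subseteq\Th_{\text{anti}}(\mathbf A)$ with $\phi$ logically equivalent to $\bigwedge F$ modulo any $\Sigma$ axiomatising ${\cat K}$. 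The catch is that Theorem~\ref{thm:relhompresthm} is a statement about \emph{all} models, whereas here ${\cat K}$ consists only of finite structures, so ${\cat K}=\Mod(\Sigma)$ need not hold for any $\Sigma$ and the model-theoretic machinery cannot be invoked directly. The plan is therefore to reduce the finite-level statement to the genuinely infinite-level Theorem~\ref{thm:Atserias}/Theorem~\ref{thm:relhompresthm} via the $n$-pinch dichotomy of Theorem~\ref{thm:SausageFacts}\eqref{itm:PinchEquiv}.

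The key step is the following dichotomy, which is essentially Theorem~\ref{thm:Atserias} in relativised form. Suppose $\mathbb H_{\cat K}^{\gets}(\mathbf A)$ is strictly elementary relative to ${\cat K}$, say equal to ${\cat K}\cap\Mod(\phi)$ with $\qr(\phi)=q$. First I would show that $\CSP(\mathbf A)$ must then satisfy the finite duality property. Suppose not; then by Theorem~\ref{thm:SausageFacts}\eqref{itm:PinchEquiv}, for every $n\in{\mathbb N}$ the $n$-pinch $\mathbf P_n(\mathbf A)$ admits \emph{no} homomorphism to $\mathbf A$, so $\mathbf P_n(\mathbf A)\notin\mathbb H_{\cat K}^{\gets}(\mathbf A)$ (and $\mathbf P_n(\mathbf A)\in{\cat K}$ by the pinch-closure hypothesis). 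On the other hand, by Theorem~\ref{thm:SausageFacts}\eqref{itm:part1} the two substructures $\mathbf B_R,\mathbf B_L$ of $\mathbf P_n(\mathbf A)$ do map into $\mathbf A$, and $\mathbf B_R\cup\mathbf B_L=\mathbf P_n(\mathbf A)$ with $\mathbf B_R\cap\mathbf B_L$ covering everything except the two pinches; these are ``large'' structures agreeing with $\mathbf A$-colourable structures everywhere except near a single long thin bottleneck. The plan is to run a Duplicator strategy in the $q$-round Ehrenfeucht--Fra\"iss\'e game on the board $(\mathbf P_n(\mathbf A)\sqcup(\text{large }\mathbf A\text{-colourable padding}),\ \mathbf B_R\sqcup\mathbf B_L\sqcup(\text{padding}))$ for $n$ sufficiently large relative to $q$ --- exploiting that in $q$ rounds Spoiler cannot ``see around'' the M\"obius-like pinch, since any two pebbled elements are either far apart (so local isomorphism holds by \eqref{itm:part1}) or close together in a region isomorphic to a chunk of $\mathbf B_R$ or $\mathbf B_L$ --- to conclude that the left structure, which is \emph{not} in $\mathbb H_{\cat K}^{\gets}(\mathbf A)$, is $\equiv_q$ to the right structure, which \emph{is} (a disjoint union of things in $\mathbb H_{\cat K}^{\gets}(\mathbf A)$ is in $\mathbb H_{\cat K}^{\gets}(\mathbf A)$, and here disjoint-union-closure of ${\cat K}$ is used). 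This contradicts $\phi$ having quantifier rank $q$ and defining $\mathbb H_{\cat K}^{\gets}(\mathbf A)$ relative to ${\cat K}$.

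Once $\CSP(\mathbf A)$ has finite duality, finitely many finite structures ${\cat S}=\{\mathbf S_1,\dots,\mathbf S_t\}$ witness it: a finite ${\mathcal R}$-structure $\mathbf B$ lies in $\CSP(\mathbf A)$ iff no $\mathbf S_i$ maps to $\mathbf B$. I would then convert each obstruction $\mathbf S_i$ into an anti-identity $\psi_i$: writing $\mathbf S_i$ with universe $\{s_1,\dots,s_{k_i}\}$ and letting $\alpha_1,\dots,\alpha_\ell$ enumerate the atomic facts of $\mathbf S_i$ (the tuples in its relations) as atomic formulas in variables $x_1,\dots,x_{k_i}$, set
\[
\psi_i\ \equiv\ \forall x_1\cdots\forall x_{k_i}\ \bigl[\neg\alpha_1\vee\cdots\vee\neg\alpha_\ell\bigr],
\]
so that a structure $\mathbf B$ satisfies $\psi_i$ exactly when $\mathbf S_i$ has no homomorphism into $\mathbf B$; hence $\mathbf B\models\psi_1\wedge\cdots\wedge\psi_t$ iff $\mathbf B\in\CSP(\mathbf A)$. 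Therefore $\psi:=\psi_1\wedge\cdots\wedge\psi_t$ is a finite conjunction of anti-identities with $\CSP(\mathbf A)=\Mod(\psi)$ restricted to finite structures, and intersecting with ${\cat K}$ (all of whose members are finite) gives $\mathbb H_{\cat K}^{\gets}(\mathbf A)={\cat K}\cap\Mod(\psi)$, as required.

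\textbf{Main obstacle.} The crux is the Ehrenfeucht--Fra\"iss\'e argument in the second paragraph: designing Duplicator's strategy on the pinch so that a $q$-round game cannot distinguish $\mathbf P_n(\mathbf A)$ from $\mathbf B_R\sqcup\mathbf B_L$ for $n\gg q$. The subtlety is that the pinch is a single connected structure while $\mathbf B_R\sqcup\mathbf B_L$ is disconnected exactly at the pinches, so Duplicator must maintain the invariant that pebbled pairs are far enough apart to sit in regions where the relevant projections ($b$ on $\mathbf B_R$, $a$ on $\mathbf B_L$) are genuine local isomorphisms; the distance bookkeeping, and the fact that ``number of critical obstructions of bounded diameter is finite'' (from \cite{LLT}) is what lets one take $n$ large enough, are where the real work lies. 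This is precisely the content borrowed from \cite{LLT} via Theorem~\ref{thm:Atserias}, and I would isolate it there rather than redo it inside this proof.
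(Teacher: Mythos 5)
Your proposal is correct and takes essentially the same route as the paper: both reduce the relativised statement to the non-relativised Theorem~\ref{thm:Atserias}, using the pinch dichotomy of Theorem~\ref{thm:SausageFacts}\eqref{itm:PinchEquiv} and the observation that the Ehrenfeucht--Fra\"{\i}ss\'e board built from ${\mathbf P}_{n'}({\mathbf A})$, ${\mathbf B}_R$, ${\mathbf B}_L$ and disjoint unions stays inside ${\cat K}$ by the closure hypotheses, and then converting the finite obstruction set into a conjunction of anti-identities via canonical conjunctive queries. The only cosmetic difference is that you argue the forward implication directly while the paper argues the contrapositive, and your opening remark about why Theorem~\ref{thm:relhompresthm} cannot be invoked directly (since ${\cat K}$ consists only of finite structures) is a correct and worthwhile observation that the paper leaves implicit.
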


We first prove a result from which Theorem~\ref{thm:Sausage} will follow almost immediately.
\begin{thm}\label{thm:Atserias}
Let ${\mathbf A}$ be a finite relational structure in the signature ${\mathcal R}$. Then the following are equivalent:
\begin{enumerate}[\quad \rm(1)]
\item $\CSP({\mathbf A})$ satisfies the finite duality property\textup;
\item $\CSP({\mathbf A})$ is definable by a finite conjunction of anti-identities\textup;
\item $\CSP({\mathbf A})$ is first-order definable\textup;
\end{enumerate}
\end{thm}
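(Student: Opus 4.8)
The plan is to prove the cycle of implications $(1)\Rightarrow(2)\Rightarrow(3)\Rightarrow(1)$, with the last implication being the substantive one. The implications $(1)\Rightarrow(2)$ and $(2)\Rightarrow(3)$ are essentially bookkeeping. For $(1)\Rightarrow(2)$: if $\CSP({\mathbf A})$ has finite duality, there is a finite obstruction set ${\cat S}=\{{\mathbf C}_1,\dots,{\mathbf C}_\ell\}$ such that a finite ${\mathcal R}$-structure ${\mathbf B}$ lies in $\CSP({\mathbf A})$ iff no ${\mathbf C}_i$ maps homomorphically into ${\mathbf B}$. For each ${\mathbf C}_i$ with universe $\{c_1,\dots,c_{n_i}\}$, the statement ``${\mathbf C}_i$ does not map into ${\mathbf B}$'' is captured by the anti-identity $\forall x_1\dots x_{n_i}\,[\neg\alpha_1\vee\dots\vee\neg\alpha_t]$ whose atomic formulas $\alpha_j$ enumerate the atomic facts holding in ${\mathbf C}_i$; so the conjunction $\psi=\bigwedge_{i}\psi_i$ over ${\cat S}$ defines $\CSP({\mathbf A})$. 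Then $(2)\Rightarrow(3)$ is immediate since a finite conjunction of anti-identities is a single first-order sentence.

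The heart of the matter is $(3)\Rightarrow(1)$: assuming $\CSP({\mathbf A})$ is first-order definable, we must produce finite duality, and by Theorem~\ref{thm:SausageFacts}\,(\ref{itm:PinchEquiv}) it suffices to show that ${\mathbf P}_n({\mathbf A})$ admits a homomorphism to ${\mathbf A}$ for some $n$. Suppose for contradiction that ${\mathbf P}_n({\mathbf A})\notin\CSP({\mathbf A})$ for every $n\in{\mathbb N}$. The idea is to derive a contradiction with $q$-elementary equivalence, where $q$ is the quantifier rank of the defining sentence $\phi$. First I would note that by Theorem~\ref{thm:SausageFacts}\,(\ref{itm:part1}), the substructures ${\mathbf B}_R$ and ${\mathbf B}_L$ of ${\mathbf P}_n({\mathbf A})$ do map into ${\mathbf A}$; intuitively ${\mathbf P}_n({\mathbf A})$ is ``one gluing away'' from a CSP member. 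The structures ${\mathbf B}_L$ and ${\mathbf B}_R$ (or a disjoint union involving them) lie in $\CSP({\mathbf A})\subseteq{\cat K}$ while ${\mathbf P}_n({\mathbf A})\notin\CSP({\mathbf A})$. The plan is then to play the $q$-round Ehrenfeucht--Fra\"{\i}ss\'e game between ${\mathbf P}_n({\mathbf A})$ and a suitable structure built from copies of ${\mathbf B}_L$ and ${\mathbf B}_R$ (possibly in disjoint union), and show Duplicator wins whenever $n$ is large compared to $q$ --- specifically $n>3\cdot 2^q$ or similar, so that after $q$ rounds Spoiler cannot have probed pebbles both near position $0$ and near position $n$ closely enough to detect the global ``twist''. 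Locally, every radius-$2^q$ neighbourhood in ${\mathbf P}_n({\mathbf A})$ looks like a corresponding neighbourhood in ${\mathbf B}_L$ or ${\mathbf B}_R$, since the $n$-link ${\mathbf L}_n$ is locally a path and the pinching only affects the two ends; the girth and diameter estimates of \cite{LLT} (invoked in the remark after Theorem~\ref{thm:SausageFacts}) make this precise. Duplicator maintains a partial isomorphism by responding to a move near position $i$ with the same element read off in the copy of ${\mathbf A}\times{\mathbf A}$ at a far-apart position, exploiting that distances exceeding $2^{q-\text{(round)}}$ need not be matched.

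I expect the main obstacle to be setting up the Ehrenfeucht--Fra\"{\i}ss\'e argument cleanly: one must choose the right ``opponent'' structure (a single disjoint union of appropriately many shifted copies of ${\mathbf B}_L$ and ${\mathbf B}_R$, all of which lie in ${\cat K}$ by closure under disjoint unions and the pinch construction), and carefully bound how the pebble distances shrink across rounds so that the local-isomorphism invariant is genuinely maintained --- in particular handling moves that land in the ``overlap'' region where a neighbourhood could be matched into either ${\mathbf B}_L$ or ${\mathbf B}_R$, and ensuring the equivalences forced by the quotient $\sim_n$ are respected on both sides. Once Duplicator's strategy is in hand, ${\mathbf P}_n({\mathbf A})\equiv_q\mathbf{(opponent)}$ forces ${\mathbf P}_n({\mathbf A})\models\phi$, hence ${\mathbf P}_n({\mathbf A})\in{\mathbb H}^{\gets}_{\cat K}({\mathbf A})=\CSP({\mathbf A})$ (here using ${\mathbf P}_n({\mathbf A})\in{\cat K}$ by closure), contradicting our assumption. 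This yields $n$ with ${\mathbf P}_n({\mathbf A})\to{\mathbf A}$, and Theorem~\ref{thm:SausageFacts}\,(\ref{itm:PinchEquiv}) delivers finite duality, completing the cycle.
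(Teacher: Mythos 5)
Your $(1)\Rightarrow(2)\Rightarrow(3)$ bookkeeping is fine and matches the paper, and your overall plan for $(3)\Rightarrow(1)$ --- reduce to the $n$-pinch via Theorem~\ref{thm:SausageFacts}\,(\ref{itm:PinchEquiv}) and win an Ehrenfeucht--Fra\"{\i}ss\'e game for large $n$ --- is the right idea. But there is a genuine gap in the game board you propose. You want Duplicator to win the $q$-round game between ${\mathbf P}_n({\mathbf A})$ and a disjoint union of copies of ${\mathbf B}_L$ and ${\mathbf B}_R$. That cannot work, because those two structures have different local ``inventory.'' Each copy of ${\mathbf B}_R$ or ${\mathbf B}_L$ contains a pinch (a copy of ${\mathbf A}$) \emph{and} a frayed end (the slice: a copy of ${\mathbf A}\times{\mathbf A}$ at position $1$ resp.\ $n-1$ with a neighbour on only one side). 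The structure ${\mathbf P}_n({\mathbf A})$, by contrast, has exactly two pinches and \emph{no} slices at all --- every ${\mathbf A}\times{\mathbf A}$ layer in ${\mathbf P}_n({\mathbf A})$ has neighbours on both sides. These distinctions are local and first-order detectable (for instance, when the arity is $\geq 2$ and some element of $A$ has positive degree, a vertex at a slice position, an interior position, and a position adjacent to a pinch have three different degrees). So if Spoiler plays a slice element, or plays pinch elements in three distinct copies of ${\mathbf B}_L$/${\mathbf B}_R$, Duplicator has nowhere legal to answer in ${\mathbf P}_n({\mathbf A})$, and increasing $n$ does not help because the problem is local, not metric.

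The paper's fix is precisely to equalise the inventory: it sets ${\mathbf H}_k$ to be $(k{+}1)$ disjoint copies of ${\mathbf B}_R\mathbin{\dot\cup}{\mathbf B}_L$ and then plays the game between ${\mathbf G}_k={\mathbf P}_{n'}({\mathbf A})\mathbin{\dot\cup}{\mathbf H}_k$ and ${\mathbf H}_k$ itself. Because ${\mathbf G}_k\setminus{\mathbf P}_{n'}({\mathbf A})\cong{\mathbf H}_k$, Duplicator can mirror every Spoiler move that lands outside ${\mathbf P}_{n'}({\mathbf A})$, and the only moves she must ``invent'' are the at most $k$ moves Spoiler spends inside the single extra pinch ${\mathbf P}_{n'}({\mathbf A})$; for each of those, $(k{+}1)$ copies of ${\mathbf B}_R\mathbin{\dot\cup}{\mathbf B}_L$ guarantee a fresh, unplayed component to respond in, matching pinch and slice distances exactly. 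The distance bookkeeping you sketch (halving thresholds $2^{k-i+1}$, tracking distance to pinches and slices) is then exactly what the paper does, but only works after this structural correction. Your phrase about ``the overlap region where a neighbourhood could be matched into either ${\mathbf B}_L$ or ${\mathbf B}_R$'' is a hint you suspected the difficulty, but it is not a matter of care in analysis: with ${\mathbf P}_n({\mathbf A})$ alone on one side, the required matching positions simply do not exist.
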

\begin{proof}
The implication ($1$) $\Rightarrow$ ($2$) is well known (see Lemma $2$ in \cite{JackTrot} for example), and ($2$) $\Rightarrow$ ($3$) is trivial.

We now prove ($3$) $\Rightarrow$ ($1$) by establishing the contrapositive.  We show that for a finite relational structure ${\mathbf A}$, if the class $\CSP({\mathbf A})$ fails the finite duality property, then the property of admitting a homomorphism to ${\mathbf A}$ is not expressible by a first-order sentence. The proof follows an Ehrenfeucht-Fra\"{\i}ss\'e game arguement; we show that, for each non-negative integer $k$, there are two structures ${\mathbf G}_k$ and ${\mathbf H}_k$ such that\textup:
\begin{itemize}
\item Duplicator has a winning strategy in the $k$-round Ehrenfeucht-Fra\"{\i}ss\'e game played on ${\mathbf G}_k$ and ${\mathbf H}_k$\textup, and
\item ${\mathbf H}_k$ belongs to $\CSP({\mathbf A})$, but ${\mathbf G}_k$ does not.
\end{itemize}

If $\CSP({\mathbf A})$ does not satisfy the finite duality property, then Theorem~\ref{thm:SausageFacts} tells us that for each natural number $n$,  there exists no homomorphism from ${\mathbf P}_n({\mathbf A})$ to~${\mathbf A}$, but the substructures ${\mathbf B}_R$ and ${\mathbf B}_L$ of ${\mathbf P}_n({\mathbf A})$ admit homomorphisms to ${\mathbf A}$.  This suggests that we construct our structures ${\mathbf G}_k$ and ${\mathbf H}_k$ using combinations of ${\mathbf P}_n({\mathbf A})$ and its substructures ${\mathbf B}_R$ and ${\mathbf B}_L$, where $n$ is some number defined in terms of $k$.

Let $k$ be a non-negative integer and assume that $n'$ is an integer strictly greater than $2^{k+1}+1$. Now consider ${\mathbf P}_{n'}({\mathbf A})$ and the disjoint union ${\mathbf B}_R \mathbin{\dot\cup} {\mathbf B}_L$.  We build ${\mathbf G}_k$ and ${\mathbf H}_k$ in the following way.  Let ${\mathbf H}_k$ be the structure obtained by taking the disjoint union of ${k+1}$ copies of ${\mathbf B}_R \mathbin{\dot\cup} {\mathbf B}_L$, and let ${\mathbf G}_k$ be ${\mathbf P}_{n'} \mathbin{\dot\cup} {\mathbf H}_k$. See Figure~\ref{figure:sausies} for an abstract depiction of the structures ${\mathbf G}_k$ and ${\mathbf H}_k$.

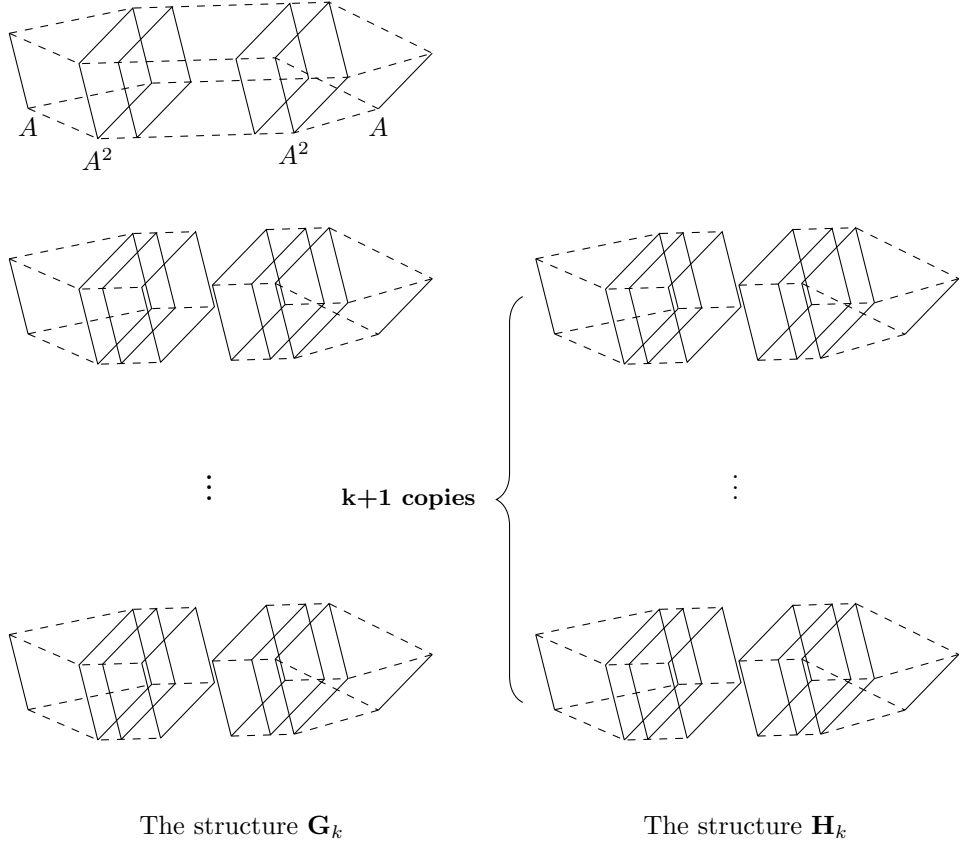
\begin{figure}
\begin{tikzpicture}[x=.5cm,y=0.3cm]
\begin{scope}[rotate=-15]
\path
node[coordinate,label=below:$A$] (v16) at (-2,0.5) {}
node[coordinate] (v17) at (-3,3.5) {};
\draw (v16)--(v17);
\path
node[coordinate,label=below:$A^2$] (v0) at (0,0) {}
node[coordinate] (v1) at (1,3) {}
node[coordinate] (v2) at (0,6) {}
node[coordinate] (v3) at (-1,3) {};
\draw (v0)--(v1)--(v2)--(v3)--(v0);
\path
node[coordinate] (v4) at (1,0.5) {}
node[coordinate] (v5) at (2,3.5) {}
node[coordinate] (v6) at (1,6.5) {}
node[coordinate] (v7) at (0,3.5) {};
\draw (v4)--(v5)--(v6)--(v7)--(v4);
\path
node[coordinate] (v8) at (4,2) {}
node[coordinate] (v9) at (5,5) {}
node[coordinate] (v10) at (4,8) {}
node[coordinate] (v11) at (3,5) {};
\draw (v8)--(v9)--(v10)--(v11)--(v8);
\path
node[coordinate,label=below:$A^2$] (v12) at (5,2.5) {}
node[coordinate] (v13) at (6,5.5) {}
node[coordinate] (v14) at (5,8.5) {}
node[coordinate] (v15) at (4,5.5) {};
\draw (v12)--(v13)--(v14)--(v15)--(v12);
\path
node[coordinate,label=below:$A$] (v18) at (7,4.5) {}
node[coordinate] (v19) at (8,7.5) {};
\draw (v18)--(v19);
\draw[dashed] (v17)--(v2)--(v6)--(v10)--(v14)--(v19);
\draw[dashed] (v17)--(v3)--(v7)--(v11)--(v15)--(v18);
\draw[dashed] (v16)--(v0)--(v4)--(v8)--(v12)--(v18);
\draw[dashed] (v16)--(v1)--(v5)--(v9)--(v13)--(v19);
\end{scope}


\begin{scope}[yshift=-3cm, rotate=-15]
\path
node[coordinate] (v16) at (-2,0.5) {} node[coordinate] (v17) at (-3,3.5) {};
\draw (v16)--(v17);
\path
node[coordinate] (v0) at (0,0) {} node[coordinate] (v1) at (1,3) {}
node[coordinate] (v2) at (0,6) {} node[coordinate] (v3) at (-1,3) {};
\draw (v0)--(v1)--(v2)--(v3)--(v0);
\path
node[coordinate] (v4) at (0.6,0.3) {} node[coordinate] (v5) at (1.6,3.3) {}
node[coordinate] (v6) at (0.6,6.3) {} node[coordinate] (v7) at (-0.4,3.3) {};
\draw (v4)--(v5)--(v6)--(v7)--(v4);
\path
node[coordinate] (v8) at (4.4,2.2) {} node[coordinate] (v9) at (5.4,5.2) {}
node[coordinate] (v10) at (4.4,8.2) {} node[coordinate] (v11) at (3.4,5.2) {};
\draw (v8)--(v9)--(v10)--(v11)--(v8);
\path node[coordinate] (v12) at (5,2.5) {} node[coordinate] (v13) at (6,5.5) {}
node[coordinate] (v14) at (5,8.5) {} node[coordinate] (v15) at (4,5.5) {};
\draw (v12)--(v13)--(v14)--(v15)--(v12);
\path
node[coordinate] (v20) at (3.4,1.7) {} node[coordinate] (v21) at (4.4,4.7) {}
node[coordinate] (v22) at (3.4,7.7) {} node[coordinate] (v23) at (2.4,4.7) {};
\draw (v20)--(v21)--(v22)--(v23)--(v20);
\path
node[coordinate] (v24) at (1.6,0.8) {} node[coordinate] (v25) at (2.6,3.8) {}
node[coordinate] (v26) at (1.6,6.8) {} node[coordinate] (v27) at (0.6,3.8) {};
\draw (v24)--(v25)--(v26)--(v27)--(v24);
\path node[coordinate] (v18) at (7,4.5) {} node[coordinate] (v19) at (8,7.5) {};
\draw (v18)--(v19);
\draw[dashed] (v17)--(v2)--(v6)--(v26);
\draw[dashed] (v22)--(v10)--(v14)--(v19);
\draw[dashed] (v17)--(v3)--(v7)--(v27);
\draw[dashed] (v23)--(v11)--(v15)--(v18);
\draw[dashed] (v16)--(v0)--(v4)--(v24);
\draw[dashed] (v20)--(v8)--(v12)--(v18);
\draw[dashed] (v16)--(v1)--(v5)--(v25);
\draw[dashed] (v21)--(v9)--(v13)--(v19);
\end{scope}
\begin{scope}[xshift=7cm, yshift=-3cm, rotate=-15]
\path
node[coordinate] (v16) at (-2,0.5) {} node[coordinate] (v17) at (-3,3.5) {};
\draw (v16)--(v17);
\path
node[coordinate] (v0) at (0,0) {} node[coordinate] (v1) at (1,3) {}
node[coordinate] (v2) at (0,6) {} node[coordinate] (v3) at (-1,3) {};
\draw (v0)--(v1)--(v2)--(v3)--(v0);
\path
node[coordinate] (v4) at (0.6,0.3) {} node[coordinate] (v5) at (1.6,3.3) {}
node[coordinate] (v6) at (0.6,6.3) {} node[coordinate] (v7) at (-0.4,3.3) {};
\draw (v4)--(v5)--(v6)--(v7)--(v4);
\path
node[coordinate] (v8) at (4.4,2.2) {} node[coordinate] (v9) at (5.4,5.2) {}
node[coordinate] (v10) at (4.4,8.2) {} node[coordinate] (v11) at (3.4,5.2) {};
\draw (v8)--(v9)--(v10)--(v11)--(v8);
\path node[coordinate] (v12) at (5,2.5) {} node[coordinate] (v13) at (6,5.5) {}
node[coordinate] (v14) at (5,8.5) {} node[coordinate] (v15) at (4,5.5) {};
\draw (v12)--(v13)--(v14)--(v15)--(v12);
\path
node[coordinate] (v20) at (3.4,1.7) {} node[coordinate] (v21) at (4.4,4.7) {}
node[coordinate] (v22) at (3.4,7.7) {} node[coordinate] (v23) at (2.4,4.7) {};
\draw (v20)--(v21)--(v22)--(v23)--(v20);
\path
node[coordinate] (v24) at (1.6,0.8) {} node[coordinate] (v25) at (2.6,3.8) {}
node[coordinate] (v26) at (1.6,6.8) {} node[coordinate] (v27) at (0.6,3.8) {};
\draw (v24)--(v25)--(v26)--(v27)--(v24);
\path node[coordinate] (v18) at (7,4.5) {} node[coordinate] (v19) at (8,7.5) {};
\draw (v18)--(v19);
\draw[dashed] (v17)--(v2)--(v6)--(v26);
\draw[dashed] (v22)--(v10)--(v14)--(v19);
\draw[dashed] (v17)--(v3)--(v7)--(v27);
\draw[dashed] (v23)--(v11)--(v15)--(v18);
\draw[dashed] (v16)--(v0)--(v4)--(v24);
\draw[dashed] (v20)--(v8)--(v12)--(v18);
\draw[dashed] (v16)--(v1)--(v5)--(v25);
\draw[dashed] (v21)--(v9)--(v13)--(v19);
\end{scope}

\begin{scope}[yshift=-6cm, rotate=-15, scale=1]
\node[font=\huge] at (2.1,6) {$\vdots$};
\end{scope}

\begin{scope}[yshift=-8cm, rotate=-15]
\path
node[coordinate] (v16) at (-2,0.5) {} node[coordinate] (v17) at (-3,3.5) {};
\draw (v16)--(v17);
\path
node[coordinate] (v0) at (0,0) {} node[coordinate] (v1) at (1,3) {}
node[coordinate] (v2) at (0,6) {} node[coordinate] (v3) at (-1,3) {};
\draw (v0)--(v1)--(v2)--(v3)--(v0);
\path
node[coordinate] (v4) at (0.6,0.3) {} node[coordinate] (v5) at (1.6,3.3) {}
node[coordinate] (v6) at (0.6,6.3) {} node[coordinate] (v7) at (-0.4,3.3) {};
\draw (v4)--(v5)--(v6)--(v7)--(v4);
\path
node[coordinate] (v8) at (4.4,2.2) {} node[coordinate] (v9) at (5.4,5.2) {}
node[coordinate] (v10) at (4.4,8.2) {} node[coordinate] (v11) at (3.4,5.2) {};
\draw (v8)--(v9)--(v10)--(v11)--(v8);
\path node[coordinate] (v12) at (5,2.5) {} node[coordinate] (v13) at (6,5.5) {}
node[coordinate] (v14) at (5,8.5) {} node[coordinate] (v15) at (4,5.5) {};
\draw (v12)--(v13)--(v14)--(v15)--(v12);
\path
node[coordinate] (v20) at (3.4,1.7) {} node[coordinate] (v21) at (4.4,4.7) {}
node[coordinate] (v22) at (3.4,7.7) {} node[coordinate] (v23) at (2.4,4.7) {};
\draw (v20)--(v21)--(v22)--(v23)--(v20);
\path
node[coordinate] (v24) at (1.6,0.8) {} node[coordinate] (v25) at (2.6,3.8) {}
node[coordinate] (v26) at (1.6,6.8) {} node[coordinate] (v27) at (0.6,3.8) {};
\draw (v24)--(v25)--(v26)--(v27)--(v24);
\path node[coordinate] (v18) at (7,4.5) {} node[coordinate] (v19) at (8,7.5) {};
\draw (v18)--(v19);
\draw[dashed] (v17)--(v2)--(v6)--(v26);
\draw[dashed] (v22)--(v10)--(v14)--(v19);
\draw[dashed] (v17)--(v3)--(v7)--(v27);
\draw[dashed] (v23)--(v11)--(v15)--(v18);
\draw[dashed] (v16)--(v0)--(v4)--(v24);
\draw[dashed] (v20)--(v8)--(v12)--(v18);
\draw[dashed] (v16)--(v1)--(v5)--(v25);
\draw[dashed] (v21)--(v9)--(v13)--(v19);
\end{scope}
\begin{scope}[xshift=7cm, yshift=-6cm, rotate=-15, scale=1]
\node[] at (2.1,6) {$\boldsymbol\vdots$};
\end{scope}
\begin{scope}[xshift=7cm, yshift=-8cm, rotate=-15]
\path
node[coordinate] (v16) at (-2,0.5) {} node[coordinate] (v17) at (-3,3.5) {};
\draw (v16)--(v17);
\path
node[coordinate] (v0) at (0,0) {} node[coordinate] (v1) at (1,3) {}
node[coordinate] (v2) at (0,6) {} node[coordinate] (v3) at (-1,3) {};
\draw (v0)--(v1)--(v2)--(v3)--(v0);
\path
node[coordinate] (v4) at (0.6,0.3) {} node[coordinate] (v5) at (1.6,3.3) {}
node[coordinate] (v6) at (0.6,6.3) {} node[coordinate] (v7) at (-0.4,3.3) {};
\draw (v4)--(v5)--(v6)--(v7)--(v4);
\path
node[coordinate] (v8) at (4.4,2.2) {} node[coordinate] (v9) at (5.4,5.2) {}
node[coordinate] (v10) at (4.4,8.2) {} node[coordinate] (v11) at (3.4,5.2) {};
\draw (v8)--(v9)--(v10)--(v11)--(v8);
\path node[coordinate] (v12) at (5,2.5) {} node[coordinate] (v13) at (6,5.5) {}
node[coordinate] (v14) at (5,8.5) {} node[coordinate] (v15) at (4,5.5) {};
\draw (v12)--(v13)--(v14)--(v15)--(v12);
\path
node[coordinate] (v20) at (3.4,1.7) {} node[coordinate] (v21) at (4.4,4.7) {}
node[coordinate] (v22) at (3.4,7.7) {} node[coordinate] (v23) at (2.4,4.7) {};
\draw (v20)--(v21)--(v22)--(v23)--(v20);
\path
node[coordinate] (v24) at (1.6,0.8) {} node[coordinate] (v25) at (2.6,3.8) {}
node[coordinate] (v26) at (1.6,6.8) {} node[coordinate] (v27) at (0.6,3.8) {};
\draw (v24)--(v25)--(v26)--(v27)--(v24);
\path node[coordinate] (v18) at (7,4.5) {} node[coordinate] (v19) at (8,7.5) {};
\draw (v18)--(v19);
\draw[dashed] (v17)--(v2)--(v6)--(v26);
\draw[dashed] (v22)--(v10)--(v14)--(v19);
\draw[dashed] (v17)--(v3)--(v7)--(v27);
\draw[dashed] (v23)--(v11)--(v15)--(v18);
\draw[dashed] (v16)--(v0)--(v4)--(v24);
\draw[dashed] (v20)--(v8)--(v12)--(v18);
\draw[dashed] (v16)--(v1)--(v5)--(v25);
\draw[dashed] (v21)--(v9)--(v13)--(v19);
\end{scope}

\begin{scope}[xshift=5.65cm, yshift=-7.5cm]
\draw [decorate,decoration={brace,amplitude=10pt}]
(0,0)--(0,18) node[midway, left, xshift=-.5cm] {\small \textbf{k+1 copies}};
\end{scope}

\begin{scope}[yshift=-10.5cm, rotate=-15, scale=1]
\node[] at (3,6) {The structure ${\mathbf G}_k$};
\end{scope}
\begin{scope}[xshift=6.7cm, yshift=-10.5cm, rotate=-15, scale=1]
\node[] at (3,6) {The structure ${\mathbf H}_k$};
\end{scope}
\end{tikzpicture}
\caption{The Ehrenfeucht-Fra\"{\i}ss\'e game ``board".}\label{figure:sausies}
\end{figure}


We begin by defining a notion of what it means for two elements in ${\mathbf G}_k$ or ${\mathbf H}_k$ to be ``close to" or ``far away" from each other; but first we need to give a concrete definition of distance between two elements in ${\mathbf G}_k$ or ${\mathbf H}_k$. We will refer to a copy of ${\mathbf P}_{n'}({\mathbf A})$ or ${\mathbf B}_{\text{R}}$ or ${\mathbf B}_{\text{L}}$ in ${\mathbf G}_k$ or ${\mathbf H}_k$ as a \emph{block-component}.  Every element $x$ in ${\mathbf G}_k$ or ${\mathbf H}_k$ is a copy of one of the form $(i,a,b)/{\sim}_{n'}$, thus we may unambiguously extend the function $\iota:(i,a,b)/{\sim}_{n'}\mapsto i$ to  both ${\mathbf G}_k$ and ${\mathbf H}_k$.  Now define the distance between two elements within ${\mathbf G}_k$ by
\[
d_{{\mathbf G}_k}(x,x'):=\begin{cases}
|\iota(x)-\iota(x')|,&\text{ if $x$, $x'$ lie in the same block component,}\\
\infty,&\text{ otherwise,}
\end{cases}
\]
with $d_{{\mathbf H}_k}$ defined identically on ${\mathbf H}_k$.  The subscripts ${\mathbf G}_k$ or ${\mathbf H}_k$ are dropped when no ambiguity arises.
Note that these ``distances'' are not metrics in the strict sense, as distinct points may have distance $0$ (if they lie in the same block component and have the same $\iota$ value).


At each round $i\in\{0, 1, \dots, k\}$, we will say that the distance between two elements $x$ and $x'$ in ${\mathbf G}_k$ or ${\mathbf H}_k$ is $\lrge_i$ if it at least $2^{k-i+1}$. Note that this also includes the case where the distance between $x$ and $x'$ is infinite. Assume that Spoiler and Duplicator have played $i'<k$ rounds of the $k$ round game and have selected some elements $g_1, \dots, g_{i'}\in{\mathbf G}_k$ and $h_1, \dots, h_{i'}\in {\mathbf H}_k$. Let $x, x' \in \{g_1, \dots, g_{i'}, h_1, \dots, h_{i'}\}$. If the distance between $x$ and $x'$ is greater than or equal to $\lrge_{i'}$ and there are no previously played elements between them, then $2^{k-{i'}+1}$ is large enough so that Spoiler cannot connect $x$ and $x'$.  More precisely: at each round of play, Spoiler may halve the distance between $x$ and $x'$;  the value of $\lrge_i$ is chosen such that all further plays (to the $k^{\rm th}$ round) will result in a gap of distance at least $\lrge_k=2$ somewhere ``between'' $x$ and $x'$.  See Libkin \cite{Lib} for very similar arguments.

The ``board" on which Spoiler and Duplicator will play the $k$-round Ehrenfeucht-Fra\"{\i}ss\'e game essentially consists of lots of copies of ${\mathbf A}$ and ${\mathbf A} \times {\mathbf A}$ (some of which are connected to one another). Of course, the orientation of ${\mathbf A} \times {\mathbf A}$ is interchangeable, because ${\mathbf A} \times {\mathbf A}$ admits a natural automorphism given by $(a,b)\to (b,a)$.  However it is convenient for our discussion to fix the ordering on these coordinates, so that we make think of  ``pinching" the second coordinate on the left side and the first coordinate on the right side.  In the same way, ${\mathbf B}_{\text{R}}$ is in fact isomorphic to ${\mathbf B}_{\text{L}}$, but it is convenient for us to visualize them with their pinches in dual orientation.

We can use the following assumption to shed some light on Duplicator's strategy to win the $k$-round game. We can assume that Spoiler's first move in the $k$-round Ehrenfeucht-Fra\"{\i}ss\'e game will be in ${\mathbf P}_{n'}({\mathbf A})$ in ${\mathbf G}_k$, since ${\mathbf G}_k\backslash {\mathbf P}_{n'}$ is isomorphic to ${\mathbf H}_k$ and so a first move made by Spoiler outside of ${\mathbf P}_{n'}$ is a wasted one. Assume that Spoiler chooses an element $g_1$ that is distance $l$ strictly less than $\lrge_1$ to the left pinch in ${\mathbf P}_{n'}$ (the case for the right pinch is symmetric). Since $l$ is small enough so that, after round $k$, it is possible for Spoiler to ``connect'' two elements and because the orientations at the left and right pinches differ, Duplicator must choose an element $h_1$ from ${\mathbf H}_k$ that is a copy of the element $g_1$ and in the exact corresponding position, that is, exactly distance $l$ from a left pinch.
Now assume that Spoiler chooses an element $g_1$ that is $\lrge_1$ from both the left and right pinch in ${\mathbf P}_{n'}$. Again, Duplicator is forced to choose an element from a copy of ${\mathbf B}_R$ or ${\mathbf B}_L$ in ${\mathbf H}_k$ and she must select an element $h_1$ that is also distance $\lrge_1$ from any right or left pinch in ${\mathbf H}_k$.  She can achieve this by selecting an exact corresponding element to $g_1$ that is somewhere around the middle of a copy of ${\mathbf B}_R$ or ${\mathbf B}_L$ (her choice of ${\mathbf B}_R$ or ${\mathbf B}_L$ does not matter since they both have diameter
 greater than or equal to $\lrge_i$).

We will show, inductively, that Duplicator can not only maintain partial isomorphism but also preserve the following conditions at each of the rounds $i\in\{0, \dots, k\}$. For $0<l,j<i$,
\begin{enumerate}[\quad \rm(1)]
\item if $d_{{\mathbf G}_k}(g_l, g_j)<2^{k-i+1}$, then $d_{{\mathbf H}_k}(h_l, h_j)=d_{{\mathbf G}_k}(g_l, g_j)$\textup;
\item if $d_{{\mathbf G}_k}(g_l, g_j)\ge2^{k-i+1}$, then $d_{{\mathbf H}_k}(h_l, h_j)\ge2^{k-i+1}$\textup;
\item if $g_j$ is of distance $l<2^{k-i+1}$ from a left pinch, then $h_j$ is of distance $l$ from left pinch\textup;
\item if $g_j$ is of distance $l<2^{k-i+1}$ from a right pinch, then $h_j$ is of distance $l$ from right pinch\textup;
\item if $g_j$ is of distance $l<2^{k-i+1}$ from a left slice, then $h_j$ is of distance $l$ from a left slice\textup;
\item if $g_j$ is of distance $l<2^{k-i+1}$ from a right slice, then $h_j$ is of distance $l$ from a right slice\textup;
\item the dual conditions to the above (in other words, switching the roles of $g$ and $h$).
\end{enumerate}

The base case holds vacuously. For the induction step, suppose that Duplicator has survived $i$ rounds of the game by maintaining the above conditions to the completion of round $i$. Assume that Spoiler is making his $({i+1})$st move in ${\mathbf G}_k$ (the case for ${\mathbf H}_k$ is symmetric). If Spoiler's selection for $g_{i+1}$ is equal to some previously played element $g_l$, where $l<i$, then Duplicator's response should be $h_l$.

Case $1$: Spoiler chooses $g_{i+1}$ so that it has distance greater than or equal to $2^{k-(i+1)+1}$ from any other previously played element in ${\mathbf G}_k$. \\
Case $1$(a): Spoiler chooses $g_{i+1}$ to be distance greater than or equal to $2^{k-(i+1)+1}$ from any pinch or slice in ${\mathbf G}_k$.
Duplicator must select $h_{i+1}$ so that it is also distance greater than or equal to $2^{k-(i+1)+1}$ from any other previously played element and from any pinch or slice in ${\mathbf H}_k$. Duplicator can always find a copy of ${\mathbf B}_{\text{R}}$ or ${\mathbf B}_{\text{L}}$ in ${\mathbf H}_k$ that contains no previously played points: after round $i$ there are at least $2(k+1)-i$ unplayed copies (each of length $2^{k+1}$) remaining in ${\mathbf H}_k$. If Duplicator selects $h_{i+1}$ to be as close as possible to the middle of one of these intervals, then we ensure that the $\lrge_{i+1}$ property and all other conditions of the hypothesis are carried through to the (${i+1}$)st round.

Case $1$(b): Spoiler chooses $g_{i+1}$ so that it is distance $l<2^{k-(i+1)+1}$ from a pinch or slice in ${\mathbf G}_k$.
Without loss of generality, suppose spoiler chooses $g_{i+1}$ so that it is distance $l<2^{k-(i+1)+1}$ from a left pinch in ${\mathbf G}_k$ .
Duplicator must select an element $h_{i+1}$ in ${\mathbf H}_k$ that is exactly distance $l$ from a left pinch in ${\mathbf H}_k$ and such that $h_{i+1}$ is distance greater than or equal to $2^{k-(i+1)+1}$ from any other previously played element. Duplicator can always find a copy of ${\mathbf B}_{\text{L}}$ in  ${\mathbf H}_k$ that contains no previously played elements: after round $i$ there are at least $(k+1)-i$ copies remaining in ${\mathbf H}_k$. Now Duplicator should select an element $h_{i+1}$ in one of these copies corresponding to the element that Spoiler chose in his copy of ${\mathbf B}_{\text{L}}$ (or ${\mathbf P}_{n'}$) relative to the natural embedding of ${\mathbf B}_{\text{L}}$ into ${\mathbf P}_{n'}$, and so that it has distance exactly $l<2^{k-(i+1)+1}$ from the left pinch. Then all the conditions of hypothesis are preserved to the ($i+1$)st round in this case.

Case $2$. Spoiler chooses $g_{i+1}$ so that it has distance strictly less than $2^{k-(i+1)+1}$ from some previously played element $g_l$ ($l<i$) in ${\mathbf G}_k$.

Case $2$(a): Spoiler chooses $g_{i+1}$ to be distance greater than or equal to $2^{k-(i+1)+1}$ from any pinch or slice in ${\mathbf G}_k$.
Without loss of generality assume that Spoiler has selected $g_{i+1}$ to be to the right of $g_l$. There are two sub-cases to consider, however Duplicator's strategy is the same in each case: she should select $h_{i+1}$ in ${\mathbf H}_k$ to be an exact copy of the element $g_{i+1}$, and exactly distance $d$ to the right of $h_l$. We only need to check that conditions $(1)$ and $(2)$ are maintained in this case.
\\
Case $2$(a)(i): If there exists a previously selected element $g_j$ to the right of $g_{i+1}$ that is distance strictly less than $2^{k-(i+1)+1}$, then $d_{{\mathbf G}_k}(g_l, g_j)<2^{k-i+1}$, and so condition ($2$) hypothesis tells us that $d_{{\mathbf H}_k}(h_l, h_j)=d_{{\mathbf G}_k}(g_l, g_j)$. Hence Duplicator's selection of $h_{i+1}$ at exactly distance $d$ to the right of $h_l$ ensures that $d_{{\mathbf H}_k}(h_l, h_{i+1})=d_{{\mathbf G}_k}(g_l, g_{i+1})$ and $d_{{\mathbf H}_k}(h_{i+1}, h_j)=d_{{\mathbf G}_k}(g_{i+1}, g_j)$. Hence all conditions of the hypothesis are maintained in this case.
\\
Case $2$(a)(ii): If the distance of any other previously played element to the right of $g_{i+1}$ is greater than or equal $2^{k-(i+1)+1}$. Then, at round $i$, the distance of any previously played element to the right of $g_l$ was greater than or equal to $2^{k-i+1}$, so condition ($1$) of the hypothesis tells us that the distance of any previously played element to the right of $h_l$ in ${\mathbf H}_k$ is $2^{k-i+1}$. Hence Duplicator's selection of $h_{i+1}$ at exactly distance $d$ to the right of $h_l$ ensures that $d_{{\mathbf H}_k}(h_l, h_{i+1})=d_{{\mathbf G}_k}(g_l, g_{i+1})$ and the distance of any other previously played element to the right of $g_{i+1}$ is greater than or equal $2^{k-(i+1)+1}$. Hence all conditions of the hypothesis are maintained in this case.
\\
Case $2$(b) Spoiler chooses $g_{i+1}$ to be distance strictly less than $2^{k-(i+1)+1}$ from any pinch or slice in ${\mathbf G}_k$.
Without loss of generality assume that Spoiler chooses $g_{i+1}$ to be distance strictly less than $2^{k-(i+1)+1}$ from a left pinch.
\\
Case $2$ (b)(i): If Spoiler chooses $g_{i+1}$ to be to the right of $g_l$ at distance $p$, then at round $i$, the distance $m$ between $g_l$ and the left pinch must have been strictly less than $2^{k-i+1}$. So condition ($3$) of the hypothesis ensures that $h_l$ is also distance $m$ from a left pinch. If Duplicator selects $h_{i+1}$ to be distance $p$ to the right of $h_l$, then $h_{i+1}$ is exactly the same distance ($m+p$) as $g_{i+1}$ from a left pinch, so condition ($3$) of the hypothesis is maintained to this round. It remains to check conditions ($1$) and ($2$): applying the arguments given in Case $2$(a)(i) and Case $2$(a)(ii), respectively guarantees that all the conditions of the hypothesis are maintained in this case.
\\
Case $2$ (b)(ii): If Spoiler chooses $g_{i+1}$ to be to the left of $g_l$ at distance $r$ strictly less than $2^{k-(i+1)+1}$, then $g_l$ must be distance $s$ strictly less than $2^{k-i+1}$ from the left pinch, and so the hypothesis ensures that $h_l$ is also distance $s$ from a left pinch. Duplicator should select $h_{i+1}$ to be distance $r$ to the right of $h_l$, then $h_{i+1}$ is exactly the same distance ($r+s$) as $g_{i+1}$ from a left pinch, and so condition ($3$) of the hypothesis is maintained to this round. This selection also guarantees that condition ($1$) is maintained in this case. If there is no previously played element further to the left of $g_{i+1}$, then we are done. If on the other hand, there exists a previously played element $g_j$ further to the left of $g_{i+1}$, we know that $d_{{\mathbf G}_k}(g_j, g_{i+1})<2^{k-(i+1)+1}$, and so we need to check condition ($1$) of the hypothesis.  At round $i$, we had $d_{{\mathbf G}_k}(g_j, g_l)<2^{k-i+1}$, and so the hypothesis tells us that $d_{{\mathbf H}_k}(h_j, h_l)=d_{{\mathbf G}_k}(g_j, g_l)$, and hence Duplicator's selection ensures that $d_{{\mathbf G}_k}(g_j, g_{i+1})=d_{{\mathbf H}_k}(h_j, h_{i+1})$.
\end{proof}

Our main result now follows almost immediately.

\begin{proof}[\textbf{Proof of Theorem}~\ref{thm:Sausage}]
We prove the contrapositive. Assume that ${\mathbb H}^{\gets}_{\cat K}({\mathbf A})$ is not definable by a finite conjunction of anti-identities modulo $\Th({\cat K})$. Then ${\mathbb H}^{\gets}_{\cat K}({\mathbf A})$ fails the finite duality property relative to the class ${\cat K}$ (clearly, ($1$) $\Rightarrow$ ($2$) of Theorem~\ref{thm:Atserias} relativises to any class). Hence $\CSP({\mathbf A})$ fails the finite duality property and so for every natural number $n$, the structure ${\mathbf P}_n({\mathbf A})$ does not admit a homomorphism to ${\mathbf A}$. Now since the class ${\cat K}$ is closed under forming the $n$-pinch construction over ${\mathbf A}$ and taking disjoint unions, the proof of ($3$) $\Rightarrow$ ($1$) of Theorem~\ref{thm:Atserias} relativises to ${\cat K}$: we have found, for each non-negative integer $k$, two structures ${\mathbf G}_k$ and ${\mathbf H}_k$ in ${\cat K}$ such that\textup:
\begin{enumerate}[\quad \rm(1)]
\item Duplicator has a winning strategy in the $k$-round Ehrenfeucht-Fra\"{\i}ss\'e game played on ${\mathbf G}_k$ and ${\mathbf H}_k$\textup, and
\item ${\mathbf H}_k$ belongs to ${\mathbb H}_{\cat K}^{\gets}({\cat A})$, but ${\mathbf G}_k$ does not.
\end{enumerate}
Hence the property of admitting a homomorphism into ${\mathbf A}$ relative to the class ${\cat K}$ is not definable by a first-order sentence.
\end{proof}

We now complete this section by extending Theorem~\ref{thm:Sausage}.
The proof follows a similar structure: we first prove a non-relativised version, namely Theorem~\ref{thm:GenrlsdSausageFacts}, from which the extended result will follow almost immediately. The following three results given in Lemma~\ref{lem:Erdos}, Corollary~\ref{cor:ExtendedErdos} and Lemma~\ref{lem:FiniteDuality} are required.  The first lemma appears as Lemma~$2.1$ in \cite{LLT}, and is proved by a probabilistic argument based on that originally given by Erd{\H o}s to show the existence, for each non-negative $k$, of a graph with chromatic number greater than $k$ and no cycles of length less than or equal to $k$. See the proof given in Theorem~$5$ of \cite{FedVar}.

\begin{lem}[\cite{LLT}, Lemma~2.1]\label{lem:Erdos}
Let ${\mathbf A}$ and ${\mathbf B}$ be ${\mathcal R}$-structures. If there exists no homomorphism from ${\mathbf B}$ to ${\mathbf A}$, then for every natural number $n$ there exists an ${\mathcal R}$-structure ${\mathbf C}_n$ of girth at least $n$ such that the following conditions hold\textup:
\begin{enumerate}[\quad \rm(1)]
\item there exists no homomorphism from ${\mathbf C}_n$ to ${\mathbf A}$\textup;
\item there exists a homomorphism from ${\mathbf C}_n$ to ${\mathbf B}$.
\end{enumerate}
\end{lem}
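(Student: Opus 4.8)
The plan is to run the classical Erd\H{o}s probabilistic construction in the relational setting, following the argument of \cite[Theorem~5]{FedVar}. For a large integer $N$ I would build a random $\mathcal R$-structure $\mathbf R$ on the vertex set $\{1,\dots,N\}$ that, \emph{by construction}, admits a homomorphism onto $\mathbf B$; then show that with probability tending to $1$ as $N\to\infty$ the structure $\mathbf R$ has few short cycles and no homomorphism to $\mathbf A$; and finally excise the short cycles by a small surgery to obtain $\mathbf C_n$, observing that deleting blocks can only increase the girth and cannot destroy a homomorphism onto $\mathbf B$. If every relation symbol has arity at most $1$ then $\Inc(\mathbf D)$ is a forest for every structure $\mathbf D$, the girth requirement is vacuous, and $\mathbf C_n=\mathbf B$ works; so assume some arity is at least $2$.

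For the construction, set $r=\max_i r_i$, choose $\chi\colon\{1,\dots,N\}\to B$ uniformly at random, and form $\mathbf R$ by independently placing each $r_i$-tuple $\bar v$ of distinct vertices with $\chi(\bar v)\in R_i^{\mathbf B}$ into $R_i^{\mathbf R}$ with probability $p_i=cN^{-(r_i-1)}$ (with $p_i=1$ when $r_i=1$), where $c$ is a parameter fixed at the end. By construction $\chi$ is a homomorphism $\mathbf R\to\mathbf B$, which gives clause~(2) and persists after surgery. For the girth, a first-moment estimate shows that for each $\ell$ with $2\le\ell<n$ the expected number of cycles with $\ell$ blocks in $\Inc(\mathbf R)$ is $O(c^\ell)$: the $\ell$ cycle vertices together with the free coordinates of the $\ell$ blocks contribute a factor $O(N^{\sum_j(r_{i_j}-1)})$, and this is exactly cancelled by the product $\prod_j p_{i_j}$ of inclusion probabilities. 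Hence the expected number of cycles with fewer than $n$ blocks is $O(1)=o(N)$, and by Markov's inequality $\mathbf R$ has at most $\sqrt N$ such cycles with probability $1-o(1)$; deleting one block from each gives $\mathbf C_n$ of girth at least $n$.

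The substantive clause is~(1). Here $\mathbf B\not\to\mathbf A$ enters via the observation that for every $a\in A$ the constant map $\mathbf B\to\mathbf A$ at $a$ is not a homomorphism, so there is a relation symbol $R_{j(a)}$ with $R_{j(a)}^{\mathbf B}\ne\emptyset$ and $(a,\dots,a)\notin R_{j(a)}^{\mathbf A}$. Fix $g\colon\{1,\dots,N\}\to A$; by pigeonhole some class $g^{-1}(a)$ has at least $N/|A|$ elements, and every $r_{j(a)}$-tuple of distinct vertices inside $g^{-1}(a)$ is mapped by $g$ to $(a,\dots,a)\notin R_{j(a)}^{\mathbf A}$, hence \emph{violates} $g$ whenever it lies in $\mathbf R$. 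A short computation shows the expected number of violating tuples of $g$ in $\mathbf R$ is at least of order $cN$; one then wants, for $c$ a large enough constant, that the probability that $g$ has at most $\sqrt N$ violating tuples in $\mathbf R$ is $o(|A|^{-N})$, so that a union bound over the $|A|^{N}$ maps $g$ shows that with probability $1-o(1)$ every $g$ has more than $\sqrt N$ violating tuples in $\mathbf R$. On the intersection of this event with the girth event, each $g$ still has a violating tuple in $\mathbf C_n$, so there is no homomorphism from $\mathbf C_n$ to $\mathbf A$; since all three events hold simultaneously with positive probability, a structure $\mathbf C_n$ with the required properties exists.

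I expect the last step to be the main obstacle: forcing the tail estimate for ``$g$ has few violating tuples'' to beat the $|A|^{N}$-term union bound. This needs the inclusion probabilities $p_i$ small enough that short cycles are $o(N)$ yet large enough that a generic $g$ has $\omega(N)$ violations, the lower bound on the expected number of violations to hold \emph{uniformly over all $|A|^{N}$ maps $g$} (which is exactly what the bad-diagonal consequence of $\mathbf B\not\to\mathbf A$ provides, via pigeonhole), and the auxiliary randomness of $\chi$ to be controlled by a concentration inequality. This delicate bookkeeping --- rather than the soft arguments above --- is the technical heart of the Erd\H{o}s-style argument, and is carried out at this level of generality in \cite[Theorem~5]{FedVar}.
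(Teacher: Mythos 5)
The high-level plan---an Erd\H{o}s-style random structure built on a colouring $\chi\colon\{1,\dots,N\}\to B$, a first-moment bound on short cycles in $\Inc(\mathbf R)$, and a union bound over the $|A|^{N}$ maps $g\colon\{1,\dots,N\}\to A$---is exactly the approach the paper defers to (it does not give its own proof of this lemma, citing \cite[Lemma~2.1]{LLT} and pointing to \cite[Theorem~5]{FedVar}); your girth computation and the remark that deleting blocks preserves the homomorphism $\chi$ to $\mathbf B$ are both fine. The flaw is in the handling of clause~(1), and it is more than the ``delicate bookkeeping'' you flag. You reduce to \emph{monochromatic diagonal} violating tuples: pick a big fibre $g^{-1}(a)$ of size $\geq N/|A|$ and a relation $R_{j(a)}$ with $R_{j(a)}^{\mathbf B}\neq\emptyset$ and $(a,\dots,a)\notin R_{j(a)}^{\mathbf A}$. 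But a tuple $\bar v$ inside $g^{-1}(a)$ is only a \emph{candidate for inclusion} in $R_{j(a)}^{\mathbf R}$ if $\chi(\bar v)\in R_{j(a)}^{\mathbf B}$. If $g^{-1}(a)$ falls (entirely or essentially) inside one $\chi$-colour class $\chi^{-1}(b_0)$ with $(b_0,\dots,b_0)\notin R_{j(a)}^{\mathbf B}$---which is geometrically possible whenever $|B|\leq|A|$ even for a perfectly balanced $\chi$, and which cannot be ruled out for a random $\chi$ uniformly over all $|A|^{N}$ maps $g$, since the $\chi$-side concentration is only $\exp(-\Theta(N))$ with a constant that has no reason to exceed $\log|A|$---then the fibre contributes \emph{no} compatible candidates, your ``expected $cN$ violations'' collapses, and the union bound is lost.

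The standard fix in \cite{FedVar,LLT} uses the hypothesis $\mathbf B\nrightarrow\mathbf A$ directly rather than through its weaker ``bad diagonal at every $a$'' consequence. Take $\chi$ to be a \emph{fixed balanced} colouring (equivalently, sparsify the $N/|B|$-fold blow-up of $\mathbf B$), and for each $g\colon\{1,\dots,N\}\to A$ define the majority-vote map $h\colon B\to A$ by $h(b):=\arg\max_a\;|\chi^{-1}(b)\cap g^{-1}(a)|$, so that $|\chi^{-1}(b)\cap g^{-1}(h(b))|\geq N/(|A||B|)$ for every $b$. Since $h$ cannot be a homomorphism $\mathbf B\to\mathbf A$, there are $i$ and $\bar b\in R_i^{\mathbf B}$ with $\bar a:=h(\bar b)\notin R_i^{\mathbf A}$; for \emph{this} $(i,\bar b,\bar a)$ the number of candidate tuples $\bar v$ with $\chi(\bar v)=\bar b$, $g(\bar v)=\bar a$, and distinct coordinates is \emph{deterministically} of order $N^{r_i}$. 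Now the only randomness in the violation count is the independent inclusion of tuples with probability $p_i$, Chernoff gives $\exp(-\Omega(cN))=o(|A|^{-N})$ for $c$ large, and the rest of your sketch (deleting $\leq\sqrt N$ blocks to kill short cycles, retaining more than $\sqrt N$ violations for every $g$, and keeping $\chi$ as a homomorphism to $\mathbf B$) goes through without further change.
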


We observe that Lemma~\ref{lem:Erdos} is easily extended to a finite number of relational structures.

\begin{cor}\label{cor:ExtendedErdos}
Let $m\in{\mathbb N}$, let ${\cat F}=\{{\mathbf A}_1, \dots, {\mathbf A}_m\}$ be a finite set of ${\mathcal R}$-structures and let ${\mathbf B}$ be an ${\mathcal R}$-structure. If ${\mathbf B}$ does not admit a homomorphism to ${\mathbf A}_i$, for each $i\in\{1, \dots, m\}$,  then for any natural number $n$, there exists an ${\mathcal R}$-structure ${\mathbf C}_n$ of girth at least $n$ such that the following conditions hold\textup:
\begin{enumerate}[\quad \rm(1)]
\item there exists no homomorphism from ${\mathbf C}_n$ to ${\mathbf A}_i$, for each $i\in\{1, \dots, m\}$\textup;
\item there exists a homomorphism from ${\mathbf C}_n$ to ${\mathbf B}$.
\end{enumerate}
\end{cor}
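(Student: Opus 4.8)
The plan is to reduce the statement to $m$ separate invocations of Lemma~\ref{lem:Erdos} and then combine the outputs by a disjoint union on the source side. Fix $n\in\mathbb{N}$. For each $i\in\{1,\dots,m\}$ the hypothesis gives that there is no homomorphism from $\mathbf{B}$ to $\mathbf{A}_i$, so I would apply Lemma~\ref{lem:Erdos} to the pair $(\mathbf{A}_i,\mathbf{B})$ with parameter $n$ to obtain an $\mathcal{R}$-structure $\mathbf{D}_i$ of girth at least $n$ admitting no homomorphism to $\mathbf{A}_i$ but admitting a homomorphism $d_i\colon\mathbf{D}_i\to\mathbf{B}$. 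Then I would set $\mathbf{C}_n:=\mathbf{D}_1\mathbin{\dot\cup}\cdots\mathbin{\dot\cup}\mathbf{D}_m$, which is again a finite $\mathcal{R}$-structure.

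The verification is then routine. The girth of $\mathbf{C}_n$ is at least $n$ because $\Inc(\mathbf{C}_n)$ is the disjoint union of the multigraphs $\Inc(\mathbf{D}_i)$, so any cycle is confined to a single $\Inc(\mathbf{D}_i)$; since each $\mathbf{D}_i$ has girth at least $n$, so does $\mathbf{C}_n$ (the case in which no $\mathbf{D}_i$ contains a cycle being vacuous). Condition~(1) holds because a homomorphism $\mathbf{C}_n\to\mathbf{A}_j$ would restrict to a homomorphism $\mathbf{D}_j\to\mathbf{A}_j$, contradicting the choice of $\mathbf{D}_j$; and condition~(2) holds because the maps $d_1,\dots,d_m$ patch together to a single homomorphism $\mathbf{C}_n\to\mathbf{B}$ acting on each component $\mathbf{D}_i$ as $d_i$.

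There is no genuine obstacle here, but the one subtlety I would be careful to point out is why a single application of Lemma~\ref{lem:Erdos} — say to the pair $(\mathbf{A}_1\mathbin{\dot\cup}\cdots\mathbin{\dot\cup}\mathbf{A}_m,\ \mathbf{B})$ — does \emph{not} suffice: the structure $\mathbf{B}$ may well admit a homomorphism into the disjoint union $\mathbf{A}_1\mathbin{\dot\cup}\cdots\mathbin{\dot\cup}\mathbf{A}_m$, by sending different connected components of $\mathbf{B}$ to different $\mathbf{A}_i$, so the hypothesis of the lemma would fail. Forming the disjoint union on the source side instead is exactly what sidesteps this difficulty: the single component $\mathbf{D}_j$ of $\mathbf{C}_n$ single-handedly obstructs every homomorphism of $\mathbf{C}_n$ into $\mathbf{A}_j$, and it does so for each $j$ simultaneously.
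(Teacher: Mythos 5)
Your proof is correct and follows essentially the same route as the paper: apply Lemma~\ref{lem:Erdos} to each pair $(\mathbf{A}_i,\mathbf{B})$ and take the disjoint union of the resulting structures. Your extra checks on girth and on patching the homomorphisms into $\mathbf{B}$, as well as the remark about why a single application to $\mathbf{A}_1\mathbin{\dot\cup}\cdots\mathbin{\dot\cup}\mathbf{A}_m$ would not work, are sound but are details the paper leaves implicit.
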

\begin{proof}
Let $i\in \{1, \dots, m\}$. Since ${\mathbf B}$ does not admit a homomorphism to ${\mathbf A}_i$, we can use Lemma~\ref{lem:Erdos}, to find a structure ${\mathbf C}_n^i$ of girth at least $n$ that admits a homomorphism to ${\mathbf B}$ but does not admit one to ${\mathbf A}_i$. Define ${\mathbf C}_n$ to be the disjoint union of the ${\mathbf C}_n^i$. Clearly, the structure ${\mathbf C}_n$ has girth at least $n$ and does not admit a homomorphism to ${\mathbf A}_i$, for all $i\in\{1, \dots, m\}$.
\end{proof}

The proof of the next lemma is identical, up to replacing Lemma~$2.1$ of \cite{LLT} with its extended version Corollary~\ref{cor:ExtendedErdos}, to the proof of Lemma~$2.4$ in \cite{LLT} . We refer the reader to the proof there.

\begin{lem}\label{lem:FiniteDuality}
Let ${\cat F}$ be a finite set of homomorphism-independent ${\mathcal R}$-structures.
Then the following are equivalent\textup:
\begin{enumerate}[\quad \rm(1)]
\item ${\mathbb H}^{\gets}_{\textup{fin}}({\cat F})$ satisfies the finite duality property\textup;
\item there is an upper bound on the diameter of the critical obstructions for ${\mathbb H}^{\gets}_{\textup{fin}}({\cat F})$.
\end{enumerate}
\end{lem}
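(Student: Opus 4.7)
My plan is to follow the template of Lemma~2.4 in \cite{LLT}, substituting our extended Erd\H{o}s-type construction Corollary~\ref{cor:ExtendedErdos} in place of Lemma~2.1 of \cite{LLT}, so that the probabilistic girth argument applies uniformly across the finite family ${\cat F}$ rather than a single target.

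For the direction $(1)\Rightarrow(2)$, I would argue directly from criticality. Let ${\cat S}$ be a finite dual set witnessing the finite duality property. Each ${\mathbf S}\in{\cat S}$ is itself an obstruction, for otherwise ${\mathbf S}$ would lie in ${\mathbb H}^{\gets}_{\textup{fin}}({\cat F})$ while still admitting a homomorphism from itself, contradicting the duality. For any critical obstruction ${\mathbf C}$, the finite duality property provides some ${\mathbf S}\in{\cat S}$ together with a homomorphism $h\from{\mathbf S}\to{\mathbf C}$. Its image $h({\mathbf S})$ is a weak substructure of ${\mathbf C}$ that remains an obstruction, since composition with any homomorphism $h({\mathbf S})\to{\mathbf A}_i$ would yield one from ${\mathbf S}$. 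Criticality therefore forces $h({\mathbf S})={\mathbf C}$, meaning $h$ is full and surjective. Full surjective homomorphisms cannot increase diameters (any path in $\Inc({\mathbf C})$ lifts to a walk in $\Inc({\mathbf S})$ via preimages), so $\dist({\mathbf C})\le\dist({\mathbf S})$, and $d:=\max\{\dist({\mathbf S})\mid{\mathbf S}\in{\cat S}\}$ bounds the diameter of every critical obstruction.

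For $(2)\Rightarrow(1)$, assuming a diameter bound $d$, the aim is to show that the set of critical obstructions is finite up to isomorphism; this set then forms a finite dual family, because every obstruction contains, as a weak substructure, at least one critical obstruction obtained by iteratively removing elements or tuples. Assuming for contradiction that infinitely many critical obstructions exist up to isomorphism, Corollary~\ref{cor:ExtendedErdos} produces, for every $n\in\NN$, an obstruction of girth at least $n$; passing to a critical weak substructure cannot decrease girth, so we obtain a critical obstruction of arbitrarily large girth while maintaining diameter at most $d$.

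The principal obstacle, and technical core of the argument, is ruling out critical obstructions whose diameter is bounded but whose girth is arbitrarily large. Elementary incidence-multigraph arithmetic shows that once the girth exceeds roughly $2d+1$, no cycle can close within a structure of diameter at most $d$, so its incidence multigraph is necessarily a forest, and connectedness (itself forced by criticality, since a disconnected critical obstruction would split into strictly smaller obstructions) makes it a tree. Following the tree-pruning argument in \cite{LLT}, one then shows that a sufficiently large tree-like critical obstruction must admit a proper weak sub-obstruction: intuitively, a deep branch whose local type already appears elsewhere in ${\mathbf C}$ can be excised without the remaining structure acquiring a homomorphism into any ${\mathbf A}_i\in{\cat F}$, contradicting criticality. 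Since ${\cat F}$ is finite the pool of local types is finite, ultimately bounding both branching and size of critical obstructions, so only finitely many isomorphism types survive and the finite duality property follows.
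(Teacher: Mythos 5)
Your overall plan---mirror the proof of Lemma~2.4 of Larose--Loten--Tardif, substituting Corollary~\ref{cor:ExtendedErdos} for their single-target girth lemma---is exactly the route the paper takes (it simply cites the LLT proof with this substitution), so the approach is the right one. In the $(1)\Rightarrow(2)$ direction one small thing is reversed: the diameter estimate comes from \emph{projecting} paths in $\Inc(\mathbf{S})$ to walks in $\Inc(\mathbf{C})$ of the same length, not from lifting paths in $\Inc(\mathbf{C})$ to $\Inc(\mathbf{S})$; that, together with connectedness of the relevant $\mathbf{S}$, is what yields $\dist_{\mathbf{C}}\le\operatorname{diam}(\mathbf{S})$.

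The $(2)\Rightarrow(1)$ direction as you have written it has a genuine gap. You assume for contradiction that there are infinitely many critical obstructions, but that hypothesis is never actually used: Corollary~\ref{cor:ExtendedErdos} produces high-girth obstructions as soon as \emph{one} obstruction exists, and the forest/pruning analysis that follows bounds the size of the \emph{high-girth} (forest-shaped) critical obstructions only. That, by itself, is not a contradiction (forests realise arbitrary girth) and says nothing about critical obstructions whose girth is below the forest threshold, which could a priori be arbitrarily large while still having diameter $\le d$. The step that actually closes the LLT argument, and is missing from your sketch, is a composition of homomorphisms: starting from an arbitrary critical obstruction $\mathbf{C}$, Corollary~\ref{cor:ExtendedErdos} gives a high-girth obstruction $\mathbf{C}_n$ with a homomorphism $\mathbf{C}_n\to\mathbf{C}$; pass to a critical weak substructure $\mathbf{C}'$ of $\mathbf{C}_n$, which has diameter $\le d$ and large girth, hence is a forest of size bounded by your pruning estimate. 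Then $\mathbf{C}'\to\mathbf{C}_n\to\mathbf{C}$, and the homomorphic image of $\mathbf{C}'$ inside $\mathbf{C}$ is a weak sub-obstruction of $\mathbf{C}$; criticality of $\mathbf{C}$ forces that image to be all of $\mathbf{C}$, giving $|C|\le|C'|$. It is this surjection from a bounded-size tree that bounds \emph{every} critical obstruction of diameter $\le d$, not merely the tree-shaped ones, and hence yields finitely many critical obstructions and the finite duality property.
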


Part of the proof of Proposition~$4.1$ in~\cite{LLT} is applicable to a slightly more general setting.  We state this as a lemma after first fixing some useful notation.
\begin{notation}
Let ${\cat F}$ be a class of finite $\mathcal{R}$-structures, let ${\mathbf C}$ be a critical obstruction for the class ${\mathbb H}_\textup{fin}^{\gets}({\cat F})$ and let $x\in C$. We let ${\mathbf C}_x$ denote the substructure of ${\mathbf C}$ induced by the set $C\backslash\{x\}$.
\end{notation}

\begin{lem}\label{lem:Hom2nPinch}
Let $n,m\in{\mathbb N}$, let ${\cat F}=\{{\mathbf A}_1, \dots, {\mathbf A}_m\}$ be a finite set of homomorphism-independent ${\mathcal R}$-structures and let ${\mathbf C}$ be a critical obstruction for the class ${\mathbb H}^{\gets}_{\textup{fin}}({\cat F})$ with diameter at least $n+2$.  Assume that there are $x,y\in C$ and $j\in\{1, \dots, m\}$ with $x$ and $y$ distance $n+2$ apart and with both ${\mathbf C}_x$ and ${\mathbf C}_y$ admitting a homomorphism to ${\mathbf A}_{j}$. Then there exists a homomorphism from ${\mathbf C}$ to ${\mathbf P}_{n}({\mathbf A}_{j})$.
\end{lem}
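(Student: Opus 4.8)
The plan is to construct the homomorphism $h \colon \mathbf{C} \to \mathbf{P}_n(\mathbf{A}_j)$ explicitly, using the distance function in $\mathbf{C}$ together with the two given homomorphisms $f \colon \mathbf{C}_x \to \mathbf{A}_j$ and $g \colon \mathbf{C}_y \to \mathbf{A}_j$. Recall that $\mathbf{P}_n(\mathbf{A}_j)$ is built from $\mathbf{L}_n \times \mathbf{A}_j \times \mathbf{A}_j$ by pinching the first $\mathbf{A}_j$-coordinate at position $0$ and the second at position $n$; so elements of $\mathbf{P}_n(\mathbf{A}_j)$ are (classes of) triples $(i,a,b)$ with $i \in \{0,\dots,n\}$. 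The natural candidate is, for $z \in C$, to set the first coordinate $\iota(h(z))$ to be something like $\min\{\max\{\dist_{\mathbf{C}}(x,z),\, \text{(offset)}\},\, n\}$ clamped into $\{0,\dots,n\}$ — using the distance from $x$ (or from $y$, which is $n+2$ away) to decide where along the link $z$ should land — and to set the two $\mathbf{A}_j$-coordinates to $f(z)$ and $g(z)$ respectively (with a default value plugged in when $z = x$ or $z = y$, i.e.\ when $f$ or $g$ is undefined). The pinching at the ends is exactly what makes this work: near $x$ the second coordinate is pinched away so only $f$ matters, and near $y$ the first coordinate is pinched away so only $g$ matters; in the middle both are available.

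The steps, in order, would be: first, fix the precise formula for $\iota(h(z))$ — I would use $\dist_{\mathbf{C}}(x,z)$ truncated above by $n$, and separately verify that if $\dist_{\mathbf{C}}(x,z) \le n$ then $\dist_{\mathbf{C}}(y,z) \ge 2$ (so that $z \ne y$, hence $g(z)$ is defined wherever needed) and symmetrically that $\iota = n$ forces $\dist_{\mathbf{C}}(x,z)\ge 2$, keeping $f$ available; this uses the triangle inequality on $\Inc(\mathbf{C})$ and the hypothesis $\dist_{\mathbf{C}}(x,y) = n+2$. Second, define the two $\mathbf{A}_j$-coordinates using $f$ and $g$, inserting an arbitrary fixed value at the pinched-away coordinate at each end, and check the definition descends to $\sim_n$-classes. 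Third, and this is the core of the argument, verify that $h$ preserves each relation $R_i$: given a tuple $\bar{c} \in R_i^{\mathbf{C}}$, all of its entries are pairwise within distance $1$ in $\Inc(\mathbf{C})$-terms (they lie in a common block), so their $\iota$-values span at most one step of the link $\mathbf{L}_n$, meaning $(\iota(h(c_1)),\dots)$ does land in some $\{j-1,j\}^{r_i} = R_i^{\mathbf{L}_n}$ piece; meanwhile the $f$-coordinates form a tuple in $R_i^{\mathbf{A}_j}$ because $f$ is a homomorphism on $\mathbf{C}_x$ — provided $x$ is not among $\bar{c}$, which holds when the tuple sits away from position $0$ — and dually for $g$. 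One must treat the boundary blocks (those containing $x$, or containing $y$) as special cases where the relevant pinched coordinate is genuinely collapsed so no constraint is imposed there.

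The main obstacle I expect is the relation-preservation check at the two ends, specifically making sure that whenever a relational tuple of $\mathbf{C}$ touches $x$, its image has first-coordinate $0$ on all entries (so the second $\mathbf{A}_j$-coordinate is pinched and the potentially-undefined $g(x)$ never actually gets used), and dually at $y$ with $\iota = n$; this is where the exact choice of the truncation formula for $\iota$ and the precise value $n+2$ (rather than $n$ or $n+1$) matters, since we need a one-block buffer at each end so that a tuple containing $x$ cannot also reach a vertex with $\iota \ge 1$. Getting these boundary bookkeeping details to line up — and confirming the $\sim_n$-quotient is respected throughout — is the delicate part; the interior of the structure is routine once the endpoints are handled. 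I would also double-check the edge case where the diameter of $\mathbf{C}$ is exactly $n+2$ versus strictly larger, since vertices with $\dist_{\mathbf{C}}(x,z) > n$ all get clamped to $\iota = n$ and must still be consistently colored by $g$.
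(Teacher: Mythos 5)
Your overall architecture matches the paper's construction (which is borrowed from Larose--Loten--Tardif, Proposition~4.1): compute a ``position'' from the distance to $x$, use the two given homomorphisms $f\colon\mathbf{C}_x\to\mathbf{A}_j$ and $g\colon\mathbf{C}_y\to\mathbf{A}_j$ for the two $\mathbf{A}_j$-coordinates, and pass to the $\sim_n$-quotient. However, there are two concrete errors in the bookkeeping, both of which would cause the map you define to fail to be a homomorphism.

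First, you have the coordinate assignment backwards. Since $\mathbf{C}_x$ is the substructure on $C\setminus\{x\}$, it is $f(x)$ that is undefined, \emph{not} $g(x)$; indeed $g\colon\mathbf{C}_y\to\mathbf{A}_j$ is defined on every element of $C\setminus\{y\}$, in particular on $x$. You put $f$ in the slot that survives the quotient at position $0$ (near $x$) and $g$ in the slot that survives at position $n$ (near $y$). That makes the surviving coordinate at each end precisely the one whose source homomorphism is undefined at that endpoint. A tuple of $R_i^{\mathbf{C}}$ touching $x$ would then have to map to a position-$0$ tuple whose $a$-coordinates lie in $R_i^{\mathbf{A}_j}$, but the $a$-coordinate at $x$ is an uncontrolled default. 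The correct orientation is the opposite: the coordinate that survives near $x$ should be $g$ (which is defined at $x$ and, since $y$ is far away, is a genuine homomorphism on all tuples in a neighbourhood of $x$), and the one that survives near $y$ should be $f$. Then the pinch at each end collapses exactly the coordinate carrying the arbitrary default, which is what one wants.

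Second, your proposed position function $\iota(z)=\min(\dist_{\mathbf{C}}(x,z),n)$ does not give you the one-block buffer you yourself say you need. With that formula, a neighbour $c$ of $x$ at distance $1$ gets $\iota(c)=1$, so a relational tuple containing $x$ spans $\{0,1\}$; the entries at position $1$ sit in the interior of the link, where both $\mathbf{A}_j$-coordinates are determined by the $\sim_n$-class, and you would be forced to place the undefined $f(x)$ (or some arbitrary stand-in) into a tuple that must land in $R_i^{\mathbf{A}_j}$, with no guarantee that this can be done. You need to use $\kappa(z)=\max\bigl(0,\min(n,\dist_{\mathbf{C}}(x,z)-1)\bigr)$ (the paper's $\kappa$), which sends $x$ \emph{and all its distance-$1$ neighbours} to position~$0$. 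Then every tuple touching $x$ lands entirely in the left pinch, where only the single surviving coordinate is checked; dually at $y$, since $\dist_{\mathbf{C}}(x,y)=n+2$ places $y$ and all its neighbours at position~$n$. This is exactly where the value $n+2$ (rather than $n+1$) enters. With both corrections your argument becomes essentially the paper's proof.
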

\begin{proof}  The lemma is essentially borrowed from \cite{LLT}, so we give only a proof sketch.
Let $x$ and $y$ be elements in ${\mathbf C}$ at distance $n+2$ and let $\alpha:{\mathbf C}_{x}\to{\mathbf A}_{j}$ and $\beta:{\mathbf C}_{y}\to{\mathbf A}_{j}$ be homomorphisms.
The maps $\kappa$ and $\phi$ defined below are shown to be homomorphisms in the proof of Proposition~$4.1$ in~\cite{LLT}.
Define $\kappa:C\to L_n$ by \begin{equation*}
\kappa(z) :=
\begin{cases} 0, \quad&\text{if $z=x$}, \\
d_{\mathbf C}(x, z)-1, \quad&\text{if $d_{\mathbf C}(x, z)\leq n+1$ and $z\neq x$},\\
n, \quad&\text{if $d_{\mathbf C}(x, z) \geq n+2$}
\end{cases}
\end{equation*}


Now select an element $a\in A$ and define the map $\phi$ from $C$ to $P_n({\mathbf A}_{j})$ by
\begin{equation*}
\phi(z) :=
\begin{cases} ((\kappa(z), \alpha(z), \beta(z))/\sim_n , \quad&\text{if $z \notin\{x, y\}$}, \\
((\kappa(z), \alpha(z), a)/\sim_n , \quad&\text{if $z=x$}, \\
((\kappa(z), a, \beta(z))/\sim_n , \quad&\text{if $z= y$}.
\end{cases}
\end{equation*}
This is the desired homomorphism $\phi:\mathbf{C}\to{\mathbf P}_{n}({\mathbf A}_{j})$.
\end{proof}

We need one more result before we can prove our main result for this section. Recall first that ${\mathbb H}^{\gets}_{\textup{fin}}({\cat F})=\bigcup\limits_{i\in\{1, \dots, m\}}\CSP({\mathbf A}_i)$.
\begin{thm}\label{thm:GenrlsdSausageFacts}
Let $m\in{\mathbb N}$ and let ${\cat F}=\{{\mathbf A}_1, \dots, {\mathbf A}_m\}$ be a finite set of homomorphism-independent ${\mathcal R}$-structures. Then the following are equivalent\textup:
\begin{enumerate}[\quad \rm(1)]
\item ${\mathbb H}^{\gets}_{\textup{fin}}({\cat F})$ is definable by a first-order sentence\textup;
\item For each $i\in\{1, \dots, m\}$, the class $\CSP({\mathbf A}_i)$ satisfies the finite duality property\textup;
\item ${\mathbb H}^{\gets}_{\textup{fin}}({\cat F})$ satisfies the finite duality property\textup;
\item ${\mathbb H}^{\gets}_{\textup{fin}}({\cat F})$ is definable by a finite conjunction of anti-identities.
\end{enumerate}
\end{thm}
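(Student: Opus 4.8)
The plan is to prove the cycle $(1)\Rightarrow(2)\Rightarrow(3)\Rightarrow(4)\Rightarrow(1)$, with only $(1)\Rightarrow(2)$ requiring real work. For $(4)\Rightarrow(1)$ there is nothing to check, since a finite conjunction of anti-identities is a single first-order sentence. For $(3)\Rightarrow(4)$ I would argue exactly as in the template case (Lemma~$2$ of~\cite{JackTrot}): given a finite obstruction set $\cat S$ for ${\mathbb H}^{\gets}_{\textup{fin}}({\cat F})$, each $\mathbf D\in\cat S$ contributes the anti-identity obtained by universally quantifying the negation of the atomic diagram of $\mathbf D$ (this expresses ``no homomorphism from $\mathbf D$''), and the conjunction of these finitely many anti-identities defines ${\mathbb H}^{\gets}_{\textup{fin}}({\cat F})$. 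For $(2)\Rightarrow(3)$ I would exploit closure under disjoint unions: if $\cat S_i$ is a finite obstruction set for $\CSP(\mathbf A_i)$ for each $i$, put $\cat S:=\{\mathbf D_1\mathbin{\dot\cup}\cdots\mathbin{\dot\cup}\mathbf D_m\mid(\mathbf D_1,\dots,\mathbf D_m)\in\cat S_1\times\cdots\times\cat S_m\}$; this set is finite, and since a disjoint union admits a homomorphism into $\mathbf A_i$ exactly when each summand does, a short check shows that a finite structure lies in ${\mathbb H}^{\gets}_{\textup{fin}}({\cat F})=\bigcup_i\CSP(\mathbf A_i)$ if and only if it admits no homomorphism from any member of $\cat S$.

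The crux is $(1)\Rightarrow(2)$, which I would prove by contraposition, recycling the Ehrenfeucht--Fra\"{\i}ss\'e construction from the proof of Theorem~\ref{thm:Atserias}. Suppose $\CSP(\mathbf A_j)$ fails the finite duality property for some $j$. By Theorem~\ref{thm:SausageFacts}\,(\ref{itm:PinchEquiv}) we then have $\mathbf P_n(\mathbf A_j)\not\to\mathbf A_j$ for every $n$. The key new observation, and the only place homomorphism-independence of $\cat F$ enters, is that in fact $\mathbf P_n(\mathbf A_j)\notin{\mathbb H}^{\gets}_{\textup{fin}}({\cat F})$: the left pinch of $\mathbf P_n(\mathbf A_j)$ is an induced substructure isomorphic to $\mathbf A_j$, so any homomorphism $\mathbf P_n(\mathbf A_j)\to\mathbf A_i$ would restrict to a homomorphism $\mathbf A_j\to\mathbf A_i$ and hence force $i=j$; as $\mathbf P_n(\mathbf A_j)\not\to\mathbf A_j$, it maps to no member of $\cat F$. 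Meanwhile $\mathbf B_R,\mathbf B_L\to\mathbf A_j$ by Theorem~\ref{thm:SausageFacts}\,(\ref{itm:part1}), so both lie in ${\mathbb H}^{\gets}_{\textup{fin}}({\cat F})$.

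Given these two facts I would run the construction of Theorem~\ref{thm:Atserias} verbatim with $\mathbf A$ taken to be $\mathbf A_j$: for each $k$, fix $n'>2^{k+1}+1$, let $\mathbf H_k$ be the disjoint union of $k+1$ copies of $\mathbf B_R\mathbin{\dot\cup}\mathbf B_L$, and let $\mathbf G_k:=\mathbf P_{n'}(\mathbf A_j)\mathbin{\dot\cup}\mathbf H_k$. Since every summand of $\mathbf H_k$ maps into $\mathbf A_j$, we get $\mathbf H_k\to\mathbf A_j$, so $\mathbf H_k\in{\mathbb H}^{\gets}_{\textup{fin}}({\cat F})$; since the component $\mathbf P_{n'}(\mathbf A_j)$ of $\mathbf G_k$ maps to no member of $\cat F$ and a disjoint union maps to $\mathbf A_i$ only if each component does, $\mathbf G_k\notin{\mathbb H}^{\gets}_{\textup{fin}}({\cat F})$. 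Duplicator's winning strategy in the $k$-round game on $\mathbf G_k$ and $\mathbf H_k$ is precisely the one already verified in the proof of Theorem~\ref{thm:Atserias}, since that argument only manipulates distances, pinches and slices within copies of $\mathbf P_{n'}$, $\mathbf B_R$ and $\mathbf B_L$ and never mentions the target structure. Hence $\mathbf G_k$ and $\mathbf H_k$ agree on all sentences of quantifier rank at most $k$ although exactly one of them lies in ${\mathbb H}^{\gets}_{\textup{fin}}({\cat F})$, so ${\mathbb H}^{\gets}_{\textup{fin}}({\cat F})$ is not first-order definable.

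I expect the main obstacle to be exactly this $(1)\Rightarrow(2)$ step: one must be confident that the Ehrenfeucht--Fra\"{\i}ss\'e bookkeeping of Theorem~\ref{thm:Atserias} is genuinely oblivious to the choice of template (so that it carries over unchanged to the family setting), and that the separating structure $\mathbf P_{n'}(\mathbf A_j)$ really escapes \emph{all} of ${\mathbb H}^{\gets}_{\textup{fin}}({\cat F})$ rather than merely $\CSP(\mathbf A_j)$ --- which, as indicated, is what homomorphism-independence buys us via the embedding $\mathbf A_j\hookrightarrow\mathbf P_n(\mathbf A_j)$. The remaining links should be routine once the product-of-obstruction-sets trick for $(2)\Rightarrow(3)$ is in place; alternatively one could route $(2)\Leftrightarrow(3)$ through Lemma~\ref{lem:FiniteDuality}, Lemma~\ref{lem:Hom2nPinch} and a pigeonhole argument along a geodesic in a critical obstruction, but the disjoint-union argument is shorter.
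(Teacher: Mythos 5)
Your proof is correct and establishes the same cycle $(1)\Rightarrow(2)\Rightarrow(3)\Rightarrow(4)\Rightarrow(1)$, but it deviates from the paper's argument in the two non-trivial links, and in both cases your route is shorter. For $(1)\Rightarrow(2)$, the paper ensures that the spoiler structure escapes $\mathbb H^{\gets}_{\textup{fin}}({\cat F})$ by adjoining a disjoint copy of $\mathbf A_j$ to both $\mathbf G_k$ and $\mathbf H_k$ and then modifying the Ehrenfeucht--Fra\"{\i}ss\'e strategy with an extra clause for plays in that copy. You instead observe that the left pinch of $\mathbf P_n(\mathbf A_j)$ already contains an isomorphic copy of $\mathbf A_j$ (more precisely, $a\mapsto(0,a,b)/{\sim_n}$ is a homomorphism $\mathbf A_j\to\mathbf P_n(\mathbf A_j)$), so any homomorphism $\mathbf P_n(\mathbf A_j)\to\mathbf A_i$ composes to a homomorphism $\mathbf A_j\to\mathbf A_i$ and homomorphism-independence forces $i=j$; thus $\mathbf P_{n'}(\mathbf A_j)\notin\mathbb H^{\gets}_{\textup{fin}}({\cat F})$ with no modification to $\mathbf G_k$, $\mathbf H_k$, or the game. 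This is a genuine simplification, and it lets you reuse the strategy of Theorem~\ref{thm:Atserias} literally rather than with an added clause. (Note, for what it's worth, that you only need the left pinch to supply a \emph{homomorphism} $\mathbf A_j\to\mathbf P_n(\mathbf A_j)$; you do not actually need it to be an induced substructure.) For $(2)\Rightarrow(3)$ the paper goes through Lemma~\ref{lem:FiniteDuality}, Lemma~\ref{lem:Hom2nPinch} and a case analysis bounding the diameter of critical obstructions for $\bigcup_i\CSP(\mathbf A_i)$; your observation that $\bigcup_i\CSP(\mathbf A_i)$ inherits finite duality by taking disjoint unions over a product of the obstruction sets $\cat S_1\times\dots\times\cat S_m$ is a standard and much more economical route (using that a disjoint union maps into $\mathbf A_i$ iff every summand does, and that members of $\cat S_i$ do not map into $\mathbf A_i$). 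The paper's choice of the longer route may be for compatibility with the more refined Lemmas~\ref{lem:Erdos}--\ref{lem:Hom2nPinch} it develops for related purposes, but your argument suffices for the equivalence as stated.
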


\begin{proof}
($1$) $\Rightarrow$ ($2$): We prove the contrapositive. Assume that $\CSP({\mathbf A}_{j})$ fails to satisfy the finite duality property. For each $k\in {\mathbb N}\cup\{0\}$, define $n_k:={2^{k+1}+1}$ and let ${\mathbf P}_{n_k}({\mathbf A}_j)$ be the $(2^{k+1}+1)$-pinch over ${\mathbf A}_{j}$. Let ${\mathbf B}_R$ and ${\mathbf B}_L$ be the substructures of ${\mathbf P}_{n_k}({\mathbf A}_j)$ induced by the sets $B_R=\{(k,a,b)/{\sim_{n'}}|\ k\ne0\}$ and $B_L=\{(k,a,b)/{\sim_{n'}} |\  k\ne n'\}$, respectively. From part (\ref{itm:part1} and \ref{itm:PinchEquiv}) of Theorem~\ref{thm:SausageFacts}, we have that, for all $k\in{\mathbb N}$, the structure ${\mathbf P}_{n_k}({\mathbf A}_{j})$ does not admit a homomorphism to ${\mathbf A}_{j}$, but the substructures ${\mathbf B}_R$ and ${\mathbf B}_L$ both admit homomorphisms to ${\mathbf A}_{j}$. Now consider the disjoint union ${\mathbf P}_{n_k}({\mathbf A}_j)\mathbin{\dot\cup}{\mathbf A}_{j}$. Since ${\cat F}=\{{\mathbf A}_1, \dots, {\mathbf A}_m\}$ is a homomorphism independent set, it follows that, for all $k\in{\mathbb N}\cup\{0\}$, the structure ${\mathbf P}_{n_k}({\mathbf A}_j)\mathbin{\dot\cup} {\mathbf A}_j$ does not belong to the class ${\mathbb H}^{\gets}_{\textup{fin}}({\cat F})$. For each $k\in{\mathbb N}\cup\{0\}$ let ${\mathbf H}_k$ be the structure obtained by taking the disjoint union of ${\mathbf A}_j$ with ${k+1}$ copies of ${\mathbf B}_R \mathbin{\dot\cup} {\mathbf B}_L$, and let ${\mathbf G}_k={\mathbf P}_{n'}({\mathbf A}_j)\mathbin{\dot\cup} {\mathbf H}_k$. Then, for each $k\in{\mathbb N}\cup\{0\}$, we have ${\mathbf H}_k\in \CSP({\mathbf A}_j)\subseteq{\mathbb H}^{\gets}_{\textup{fin}}({\cat F})$, while ${\mathbf G}_k\notin{\mathbb H}^{\gets}_{\textup{fin}}({\cat F})$. Now apply the argument given in Theorem~\ref{thm:Sausage} with the extra condition that whenever Spoiler selects an element $g$ from ${\mathbf G}_k$ in the extra copy of ${\mathbf A}_j$, then Duplicator selects the corresponding element $h$ from the extra copy of ${\mathbf A}_j$ in ${\mathbf H}_k$, and vice-versa. Hence, for each non-negative integer $k$, Duplicator has a winning strategy in the $k$-round Ehrenfeucht-Fra\"{\i}ss\'e game played on ${\mathbf G}_k$ and ${\mathbf H}_k$, and so the property of admitting a homomorphism to ${\cat F}$ is not definable by a first-order sentence.

($2$) $\Rightarrow$ ($3$): We will show that there is a bound on the the diameter of the critical obstructions for ${\mathbb H}^{\gets}_{\textup{fin}}({\cat F})$. The required result will then follow by Lemma~\ref{lem:FiniteDuality}.  Assume that, for each $i\in\{1, \dots, m\}$, the class $\CSP({\mathbf A}_{i})$ satisfies the finite duality property. Then, for each $i\in\{1, \dots, n\}$, there exists a natural number $l_i$ with ${\mathbf P}_{l_i}({\mathbf A}_{i})$ admitting a homomorphism to ${\mathbf A}_{i}$, by part (\ref{itm:PinchEquiv}) of Theorem~\ref{thm:SausageFacts}. Let ${\mathbf C}$ be any critical obstruction for ${\mathbb H}^{\gets}_{\textup{fin}}({\cat F})$. Of course, for any element $z\in C$, the substructure ${\mathbf C}_{z}$ admits a homomorphism to at least one of the ${\mathbf A}_{i}$. Now let $k$ be the diameter of ${\mathbf C}$, and let $x$ and $y$ be elements in $C$ at distance $k$. We need to consider two cases.

\emph{Case $1$}\textup: there exists $j\in\{1, \dots, m\}$ with ${\mathbf C}_{x}$ and ${\mathbf C}_{y}$ admitting homomorphisms to ${\mathbf A}_{j}$. Lemma~\ref{lem:Hom2nPinch} tells us that ${\mathbf C}$ admits a homomorphism to ${\mathbf P}_{k-2}({\mathbf A}_j)$. But since ${\mathbf P}_{l_{j}}({\mathbf A}_{j})$ admits a homomorphism to ${\mathbf A}_{j}$, and therefore ${\mathbf P}_{l}({\mathbf A}_{j})$ admits a homomorphism to ${\mathbf A}_j$, for all $l\geq l_j$ (by part~\ref{itm:pinchsurjhom} of Theorem~\ref{thm:SausageFacts}), we must have that $k$ is strictly less than $l_{j}+2$.

\emph{Case $2$}\textup: there exists $p, q\in\{1, \dots, m\}$ with $p\ne q$ and with ${\mathbf C}_{x}$ admitting a homomorphism to ${\mathbf A}_{p}$ and ${\mathbf C}_{y}$ admittng a homomorphism to ${\mathbf A}_{q}$. We will show that $k$ is less than or equal to $l_1 + l_2 + \dots + l_m + 2m$.
If there exists $x', y'\in C$ and $j\in\{1, \dots, m\}$ with ${\mathbf C}_{x'}$ and ${\mathbf C}_{y'}$ admitting homomorphisms to $A_j$, then from Case $1$, the maximum distance between $x'$ and $y'$ in ${\mathbf C}$ is strictly less than $l_{j}+2$. Thus, since ${\mathbf C}_{x}$ admits a homomorphism to ${\mathbf A}_{p}$, any element $z\in C$ with ${\mathbf C}_{z}$ admitting a homomorphism to ${\mathbf A}_{p}$ must be at most distance $l_p+1$ from $x$. Therefore any element $w\in C$ with distance exactly $l_p+2$ from $x$ must have ${\mathbf C}_{w}$ admitting a homomorphism to ${\mathbf A}_{s}$, where $s\in\{1, \dots,m\}$ and $s\ne p$. Then any element $c\in C$ with distance $l_s+2$ from $w$ (and distance $l_p + l_s+4$ from $x$) must have ${\mathbf C}_{c}$ admitting a homomorphism to ${\mathbf A}_{t}$, for some $t\notin \{p,s\}$. We can continue this process until we run out of ${\mathbf A_i}$'s. It follows that the diameter $k$ of ${\mathbf C}$ is at most $l_1 + l_2 + \dots + l_m +2m$.

The implication ($3$) $\Rightarrow$ ($4$) is routine, see Lemma $2$ in \cite{JackTrot} for example, and the implication ($4$) $\Rightarrow$ ($1$) is trivial.
\end{proof}

Now the proof of our extended result follows almost immediately.

\begin{thm}\label{thm:colourfam}
Let $m\in {\mathbb N}$. Let ${\cat K}$ be a class of finite relational structures in the signature ${\mathcal R}$ and let ${\cat F}=\{{\mathbf A}_1, \dots, {\mathbf A}_m\}$ be a finite set of homomorphism-independent structures in ${\cat K}$. If ${\cat K}$ is closed under forming the $n$-pinch construction over ${\cat F}$ and taking disjoint unions, then ${\mathbb H}_{\cat K}^{\gets}(\cat F)$ is equal to ${\cat K}\cap \Mod(\phi)$ for some first order sentence $\phi$ if and only if there is a finite conjunction of anti-identities $\psi$ such that ${\mathbb H}_{\cat K}^{\gets}(\cat F)={\cat K}\cap \Mod(\psi)$.
\end{thm}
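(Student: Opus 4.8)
The plan is to deduce Theorem~\ref{thm:colourfam} from Theorem~\ref{thm:GenrlsdSausageFacts} in exactly the way that Theorem~\ref{thm:Sausage} is deduced from Theorem~\ref{thm:Atserias}: the non-relativised equivalences are already in hand, and the closure hypotheses on ${\cat K}$ are precisely what is needed to transplant the Ehrenfeucht--Fra\"{\i}ss\'e construction into ${\cat K}$. I would argue the contrapositive: assume there is no finite conjunction of anti-identities $\psi$ with ${\mathbb H}_{\cat K}^{\gets}(\cat F)={\cat K}\cap\Mod(\psi)$, and show there is no first-order sentence $\phi$ with ${\mathbb H}_{\cat K}^{\gets}(\cat F)={\cat K}\cap\Mod(\phi)$.

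First I would note that the implication $(3)\Rightarrow(4)$ of Theorem~\ref{thm:GenrlsdSausageFacts} relativises to any class: if ${\mathbb H}_{\cat K}^{\gets}(\cat F)$ had finite duality relative to ${\cat K}$, witnessed by a finite dual set ${\cat S}$, then ``avoids a homomorphism from each member of ${\cat S}$'' is a finite conjunction of anti-identities cutting out ${\mathbb H}_{\cat K}^{\gets}(\cat F)$ inside ${\cat K}$ (cf.\ Lemma~$2$ of \cite{JackTrot}). Contrapositively, our hypothesis forces ${\mathbb H}_{\cat K}^{\gets}(\cat F)$ to fail finite duality relative to ${\cat K}$, and hence ${\mathbb H}^{\gets}_{\textup{fin}}({\cat F})$ to fail finite duality (an unrelativised dual set would serve relative to ${\cat K}$). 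By $(2)\Leftrightarrow(3)$ of Theorem~\ref{thm:GenrlsdSausageFacts} there is a $j$ with $\CSP({\mathbf A}_j)$ failing the finite duality property, and then parts~(\ref{itm:part1}) and~(\ref{itm:PinchEquiv}) of Theorem~\ref{thm:SausageFacts} tell us that for every $n$ the $n$-pinch ${\mathbf P}_n({\mathbf A}_j)$ admits no homomorphism to ${\mathbf A}_j$, while ${\mathbf B}_R$ and ${\mathbf B}_L$ both do.

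Next I would rerun the construction from the proof of $(1)\Rightarrow(2)$ of Theorem~\ref{thm:GenrlsdSausageFacts}. For each $k$ fix $n'>2^{k+1}+1$, form ${\mathbf P}_{n'}({\mathbf A}_j)$ together with its substructures ${\mathbf B}_R,{\mathbf B}_L$, let ${\mathbf H}_k$ be the disjoint union of ${\mathbf A}_j$ with $k+1$ copies of ${\mathbf B}_R\mathbin{\dot\cup}{\mathbf B}_L$, and set ${\mathbf G}_k={\mathbf P}_{n'}({\mathbf A}_j)\mathbin{\dot\cup}{\mathbf H}_k$. Closure of ${\cat K}$ under the $n$-pinch over ${\cat F}$ puts ${\mathbf P}_{n'}({\mathbf A}_j),{\mathbf B}_R,{\mathbf B}_L$ in ${\cat K}$, and closure under disjoint unions (together with ${\mathbf A}_j\in{\cat K}$) then puts ${\mathbf G}_k$ and ${\mathbf H}_k$ in ${\cat K}$; this is the only genuinely new point. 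Since ${\mathbf A}_j$, ${\mathbf B}_R$, ${\mathbf B}_L$ all map to ${\mathbf A}_j$, we get ${\mathbf H}_k\to{\mathbf A}_j$, so ${\mathbf H}_k\in{\mathbb H}_{\cat K}^{\gets}(\cat F)$. On the other hand, any homomorphism from ${\mathbf G}_k$ to a member ${\mathbf J}$ of ${\cat F}$ restricts to a homomorphism ${\mathbf A}_j\to{\mathbf J}$ on the distinguished summand, forcing ${\mathbf J}={\mathbf A}_j$ by homomorphism-independence of ${\cat F}$, and then restricts to a homomorphism ${\mathbf P}_{n'}({\mathbf A}_j)\to{\mathbf A}_j$, a contradiction; so ${\mathbf G}_k\notin{\mathbb H}_{\cat K}^{\gets}(\cat F)$.

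Finally, the Ehrenfeucht--Fra\"{\i}ss\'e argument is quoted from the proofs of Theorem~\ref{thm:Atserias} and Theorem~\ref{thm:GenrlsdSausageFacts}: Duplicator follows the distance-preserving strategy developed there, with the one extra convention that a Spoiler move inside the distinguished copy of ${\mathbf A}_j$ in either structure is answered by the matching element of the distinguished copy of ${\mathbf A}_j$ in the other. This yields a winning strategy for Duplicator in the $k$-round game on ${\mathbf G}_k$ and ${\mathbf H}_k$ for every $k$, so no first-order sentence separates ${\mathbb H}_{\cat K}^{\gets}(\cat F)$ from its complement inside ${\cat K}$, completing the contrapositive. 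I expect no real obstacle: the entire substance sits in Theorem~\ref{thm:GenrlsdSausageFacts}, and the only thing left to verify is the membership ${\mathbf G}_k,{\mathbf H}_k\in{\cat K}$, which the closure hypotheses hand us directly --- which is why the theorem ``follows almost immediately''.
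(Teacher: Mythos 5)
Your proposal is correct and follows essentially the same route as the paper: prove the contrapositive, pass from failure of relativised finite duality to failure of unrelativised finite duality, use the contrapositive of $(2)\Rightarrow(3)$ of Theorem~\ref{thm:GenrlsdSausageFacts} to extract a bad $\CSP(\mathbf{A}_j)$, and then note that the $\mathbf{G}_k,\mathbf{H}_k$ construction from $(1)\Rightarrow(2)$ of that theorem lands in $\cat K$ thanks to the closure hypotheses. The paper compresses all of this into a few lines (``\dots the proof of $(1)\Rightarrow(2)$ of Theorem~\ref{thm:GenrlsdSausageFacts} relativises to $\cat K$''); you have simply spelled out the same steps.
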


\begin{proof}
We prove the contrapositive. Assume that ${\mathbb H}^{\gets}_{\cat K}({\cat F})$ is not definable by a finite conjunction of anti-identities modulo $\Th({\cat K})$. Then ${\mathbb H}^{\gets}_{\cat K}({\cat F})$ fails the finite duality property relative to the class ${\cat K}$, and therefore ${\mathbb H}_\textup{fin}(\cat F)$ fails the finite duality property. So there is some $j\in\{1, \dots, m\}$ with $\CSP({\mathbf A}_{j})$ failing to satisfy the finite duality property, using ($2$) $\Rightarrow$ ($3$) of Theorem~\ref{thm:GenrlsdSausageFacts}.
Now since the class ${\cat K}$ is closed under forming the $n$-pinch construction over ${\cat F}$ and taking disjoint unions, the proof of ($1$) $\Rightarrow$ ($2$) of Theorem~\ref{thm:GenrlsdSausageFacts} relativises to ${\cat K}$. Hence the property of admitting a homomorphism into ${\cat F}$ relative to the class ${\cat K}$ is not definable by a first-order sentence.
\end{proof}

We have made multiple references to closure under formation of the $n$-pinch and taking disjoint unions.
We conclude this section with some examples of sentences that are closed under these constructions.
\begin{eg}\label{eg:examplelaws}
Let $\mathcal{R}=\{R_1, \dots, R_m\}$ be a relational signature.  Then a class of $\mathcal{R}$-structures ${\cat F}$ is closed under forming the $n$-pinch \up(for sufficiently large $n$\up) and disjoint unions if it is definable by sentences of any of the following forms\textup: 
\begin{enumerate}
\item reflexivity\up: $\forall x\, (x,\dots,x)\in R_i$\textup;
\item any finite system of anti-identities\textup;
\item implications of the  form
\[
\forall x_1, \dots, x_{r_i}\ [(x_1, \dots, x_{r_i})\in R_i\rightarrow (x_{l_1}, \dots, x_{l_{r_j}})\in R_j],
\tag{$*$}\label{eqn:star}\]
where $\{l_1,\dots,l_{r_j}\}\subseteq\{1,\dots,r_i\}$ and $|\{x_1,\dots,x_{r_i}\}|=r_i$.
\end{enumerate}
\end{eg}
\begin{proof}
Closure under disjoint unions is obvious in each case; in case (3) this uses the fact that the conclusion of the implication involves variables only appearing in the premise.  We now discuss  the formation of $n$-pinches (for sufficiently large $n$).

Case (1) is trivial, while case (2) follows from the fact that for $m<n$, the $m$-vertex substructures of the $n$-pinch over $\mathbf{A}$ admit a homomorphism into $\mathbf{A}$ (by Theorem~\ref{thm:SausageFacts}(1)), and hence the $n$-pinch will satisfy any anti-identity in at most $m$ variables that is true on $\mathbf{A}$.

For Case (3), let $(x_1,\dots, x_{r_i})=((k_1, a_1, b_1)/{\sim_n}, \dots ,(k_{r_i}, a_{r_i}, b_{r_i})/{\sim_n})\in R_i^{\mathbf{P}_n(\mathbf{A})}$. We have $\{k_1, \dots, k_{r_i}\}\subseteq \{k, k+1\}$. If $k=0$, then from the definition of $\sim_{n}$, for each $m\in\{1, \dots, r_i\}$, there exists $(k_m, a_m, b_m')\in (k_m, a_m, b_m)/{\sim_n}$ satisfying $((k_1, a_1, b_1'),\dots, (k_{r_i}, a_{r_i}, b_{r_i}'))\in R_i^{\mathbf{L}_n\times \mathbf{A}\times \mathbf{A}}$. It follows that $(a_1, \dots, a_{r_i})\in R_i^{\mathbf A}$ and $(b_1', \dots, b_{r_i}')\in R_i^{\mathbf{A}}$. Note that here we are implicitly using the condition that $|\{x_1,\dots,x_{r_i}\}|=r_i$; if $x_1, \dots, x_{r_i}$ contained repeated elements then we could not guarantee repeated elements in $b_1', \dots, b_{r_i}'$. Now interpreting \eqref{eqn:star} in ${\mathbf A}$, gives $(a_{l_1}, \dots, a_{l_{r_j}})\in R_{j}^{\mathbf A}$ and $(b_{l_1}', \dots, b_{l_{r_j}}')\in R_{j}^{\mathbf A}$, where $\{l_1,\dots, l_{r_j}\}\subseteq \{1, \dots, r_{i}\}$. It follows that $(x_{l_1}, \dots, x_{l_{r_j}})\in R_{j}^{\mathbf{P}_n(\mathbf{A})}$. A similar argument works for $k=n$ and even easier argument works for $k\in \{1, \dots, n-1\}$.
\end{proof}
The class of simple graphs for example is definable by one anti-identity $(\forall x) \neg R(x,x)$ and one implication $(\forall x, y)\ R(x,y) \rightarrow R(y,x)$ of the form described in Example~\ref{eg:examplelaws}(3).

\bibliographystyle{plain}

\end{document}